\newtheorem{thm}{Theorem}[section]
\newtheorem{lem}[thm]{Lemma}
\newtheorem{rem}[thm]{Remark}
\newtheorem{assumption}[thm]{Assumption}
\begin{document}
\newcommand{\BX}{{\bf X}}
\newcommand{\cv}{{\cal V}}
\newcommand{\cW}{{\cal W}}
\newcommand{\co}{{\cal O}}

\def\abstract{
\advance \rightskip by 10mm
\advance \leftskip by 10mm
\vspace{-0.8em}
\noindent
\small{\bf Abstract.}
}
\def\endabstract{\par\normalsize\rm}

\def\Xint#1{\mathchoice
{\XXint\displaystyle\textstyle{#1}}%
{\XXint\textstyle\scriptstyle{#1}}%
{\XXint\scriptstyle\scriptscriptstyle{#1}}%
{\XXint\scriptscriptstyle\scriptscriptstyle{#1}}%
\!\int}
\def\XXint#1#2#3{{\setbox0=\hbox{$#1{#2#3}{\int}$}
\vcenter{\hbox{$#2#3$}}\kern-.5\wd0}}
\def\ddashint{\Xint=}
\def\dashint{\Xint-}

%Greek Letters
\def\a{\alpha}
\def\b{\beta}
\def\d{\delta}\def\D{\Delta}
\def\e{\epsilon}
\def\g{\gamma}\def\G{\Gamma}
\def\k{\kappa}
\def\lam{\lambda}\def\Lam{\Lambda}
\renewcommand\o{\omega}\renewcommand\O{\Omega}
\def\s{\sigma}\def\S{\Sigma}
\renewcommand\t{\theta}\def\vt{\vartheta}
\newcommand{\vphi}{\varphi}
\def\z{\zeta}

\newcommand{\tsigma}{\tilde{\s}}
\newcommand{\tbsigma}{\tilde{\bsigma}}
\def\te{\tilde{\e}}
\def\tu{\tilde{u}}

\newcommand{\bchi}{\mbox{\boldmath$\chi$}}
\newcommand{\bdelta}{\mbox{\boldmath$\delta$}}
\newcommand{\bepsilon}{\mbox{\boldmath$\epsilon$}}
\newcommand{\bfeta}{\mbox{\boldmath$\eta$}}
\newcommand{\bgamma}{\mbox{\boldmath$\gamma$}}
\newcommand{\bomega}{\mbox{\boldmath$\omega$}}
\newcommand{\bvphi}{\mbox{\boldmath$\varphi$}}
\newcommand{\bphi}{\mbox{\boldmath$\phi$}}
\newcommand{\bPhi}{\mbox{\boldmath$\Phi$}}
\newcommand{\bpsi}{\mbox{\boldmath$\psi$}}
\newcommand{\bPsi}{\mbox{\boldmath$\Psi$}}
\newcommand{\bsigma}{\mbox{\boldmath$\sigma$}}
\newcommand{\btau}{\mbox{\boldmath$\tau$}}
\newcommand{\bxi}{\mbox{\boldmath$\xi$}}
\newcommand{\brho}{\mbox{\boldmath$\rho$}}
\newcommand{\bbeta}{\mbox{\boldmath$\beta$}}
\newcommand{\bzeta}{\mbox{\boldmath$\zeta$}}

\def\bk{\boldsymbol{\kappa}}
\def\bmu{\boldsymbol\mu}
\def\bxi{\boldsymbol{\xi}}
\def\bz{\boldsymbol{\zeta}}
%%%%%%%%

%English Letters
\def\ba{{\bf a}}
\def\bb{{\bf b}}
\def\bc{{\bf c}}
\def\be{{\bf e}}
\def\bff{{\bf f}}
\def\bg{{\bf g}}
\def\bn{{\bf n}}
\def\bp{{\bf p}}
\def\bq{{\bf q}}
\def\bs{{\bf s}}
\def\bt{{\bf t}}
\def\bu{{\bf u}}
\def\bv{{\bf v}}
\def\bw{{\bf w}}
\def\bx{{\bf x}}
\def\by{{\bf y}}
\def\bzz{{\bf z}}

\def\bD{{\bf D}}
\def\bE{{\bf E}}
\def\bF{{\bf F}}
\def\bH{{\bf H}}
\def\bJ{{\bf J}}
\def\bV{{\bf V}}
\def\bU{{\bf U}}
\def\bW{{\bf W}}
\def\bX{{\bf X}}
\def\bY{{\bf Y}}

\def\cA{{\cal A}}
\def\cC{{\cal C}}
\def\cD{{\cal D}}
\def\cE{{\cal E}}
\def\cF{{\cal F}}
\def\cG{{\cal G}}
\def\cI{{\cal I}}
\def\cJ{{\cal J}}
\def\cK{{\cal K}}
\def\cL{{\cal L}}
\def\cO{{\cal O}}
\def\cP{{\cal P}}
\def\cQ{{\cal Q}}
\def\cR{{\cal R}}
\def\cS{{\cal \Sigma}}
\def\cT{{\cal T}}
\def\cU{{\cal U}}
\def\cV{{\cal V}}

\def\scT{{_\cT}}
\def\sD{{_D}}
\def\sE{{_E}}
\def\sF{{_F}}
\def\sFz{{_{F_z}}}
\def\sK{{_K}}
\def\sI{{_I}}
\def\sb{{_b}}
\def\sN{{_N}}

\def\curl{{{\bf curl} \ }}
\def\rot{{\mbox{rot}\ }}
\def\BPI{{\bf \Pi}}

\def\cth{\cT_h}
\def\ctH{\cT_H}

\def\tJ{\tilde{\J}}

\def\hK{\widehat{K}}
\def\hx{\widehat{x}}
\def\hy{\widehat{y}}
\def\bhv{\widehat{\bv}}
%%%%%%%%%

%math symbols
\def\l{\ell}
\def\bl{\boldsymbol{\ell}}
\def\col{\colon}
\def\f12{\frac12}
\def\dfrac{\displaystyle\frac}
\def\dint{\displaystyle\int}
\def\nab{\nabla}
\def\p{\partial}
\def\sm{\setminus}
\def\dsum{\displaystyle\sum}
\newcommand{\pp}[2]{\frac{\partial {#1}}{\partial {#2}}}
\def\bzero{{\bf 0}}

\def\divv{\nab\cdot}
\def\divx{\nab_x\cdot}
\def\divtx{\nab_{t,x}\cdot}
\def\nabx{\nab_x}

\newcommand{\grad}{\nabla}
\newcommand{\curlt}{{\nabla \times}}
\newcommand{\gperp}{\nabla^{\perp}}
\newcommand{\gradt}{\nabla\cdot}

\def\forallqq{\quad\forall\,}
\def\aph{A^{1/2}}
\def\amh{A^{-1/2}}

\def\osc{{\rm osc \, }}

\def\Im{{\rm Im}}
\newcommand{\tr}{{\rm tr}}
\def\divvr{{\rm div}}
\def\curllr{{\rm curl}}
\def\curll{{\rm curl}}
\def\curl{{\bf curl}}
\newcommand{\bgrad}{{\bf grad}}
\newcommand\diam{\mathrm{diam\,}}
\renewcommand\Im{\mathrm{Im\,}}
\def\Span{\mbox{Span}}
\def\supp{\mbox{supp\,}}
\newcommand{\trace}{{\rm trace}}

\newcommand{\tri}{|\!|\!|}
\newcommand{\ljump}{\lbrack\!\lbrack}
\newcommand{\rjump}{\rbrack\!\rbrack}
%%%%%%%%%%%%%%%%%%%%%%%%%%%%%%%%%%%%%%%%%%%%%%
\newcommand{\bdm}{\begin{displaymath}}
\newcommand{\edm}{\end{displaymath}}
\newcommand{\beq}{\begin{equation}}
\newcommand{\eeq}{\end{equation}}
\newcommand{\beqa}{\begin{eqnarray}}
\newcommand{\eeqa}{\end{eqnarray}}
\newcommand{\beqas}{\begin{eqnarray*}}
\newcommand{\eeqas}{\end{eqnarray*}}
%misc
\newcommand{\ul}{\underline}
\newcommand{\wh}{\widehat}
\newcommand{\la}{\langle}
\newcommand{\ra}{\rangle}

%Spaces
\newcommand{\Lt}{L^2(\Omega)}
\newcommand{\Lts}{L^2(\Omega)^2}
\newcommand{\Ltc}{L^2(\Omega)^3}
\newcommand{\Ho}{H^1(\Omega)}
\newcommand{\Hoh}{H^1(\wh{\Omega})}
\newcommand{\Hoi}{H^1(\Omega_i)}
\newcommand{\Hos}{H^1(\Omega)^2}
\newcommand{\Hoc}{H^1(\Omega)^3}
\newcommand{\Hoch}{H^1(\wh{\Omega})^3}
\newcommand{\Hoci}{H^1(\Omega_i)^3}
\newcommand{\Hoz}{H^1_0(\Omega)}
\newcommand{\Ht}{H^2(\Omega)}
\newcommand{\Hti}{H^2(\Omega_i)}
\newcommand{\Hts}{H^2(\Omega)^2}
\newcommand{\Htc}{H^2(\Omega)^3}
\newcommand{\Htz}{H^0(\Omega)}
\newcommand{\Hh}{H^{1/2}(\Gamma)}
\newcommand{\Hhi}{H^{1/2}(\Gamma_i)}
\newcommand{\Hmh}{H^{-1/2}(\Gamma)}
\newcommand{\Hdiv}{H(\divvr;\,\Omega)}
\newcommand{\Hdivh}{H(\divv;\,\wh \Omega)}
\newcommand{\hcurl}{H(\curl\,A;\,\Omega)}
\newcommand{\Hcurl}{H(\curll\,A;\,\Omega)}
\newcommand{\Hcrl}{H(\curll\,;\,\Omega)}
\newcommand{\hcrl}{H(\curl\,;\,\Omega)}
\newcommand{\Hcrlh}{H(\curll\,;\,\wh\Omega)}
\newcommand{\hcrlh}{H(\curl\,;\,\wh\Omega)}
\newcommand{\Wdiv}{\BW_0(\mbox{\divv}\,;\,\Omega)}
\newcommand{\Wcurl}{\BW_0(\mbox{\curl}\,A;\,\Omega)}
\newcommand{\WcrossV}{\BW \times V}

\def\grad{{\nabla}}

\def\calS{{\cal S}}
\def\calT{{\cal T}}
\def\cA{{\mathcal A}}
\def\cB{{\cal B}}
\def\cH{{\cal H}}
\def\ba{{\mathbf{a}}}
\def\cM{{\cal M}}
\def\cN{{\cal N}}
\def\cT{{\mathcal{T}}}
\def\cE{{\mathcal{E}}}

\newcommand{\aA}{{ \a_{\sF,_A}}}

\newcommand{\aH}{{ \a_{\sF,_H}}}

\def\tsigma{{\tilde{\tilde{\sigma}}}}

\def\Om{\Omega}

\newcommand{\sC}{{\mathbb C}}

\newcommand{\norm}[1]{\left\Vert#1\right\Vert}
\def\vvvert{|\kern-1pt|\kern-1pt|}
\newcommand{\newnorm}[1]{\vvvert #1\vvvert}

\def\bE{{\bf E}}
\def\bS{{\bf S}}
\def\br{{\bf r}}
\def\bW{{\bf W}}
\def\bLambda{{\bf \Lambda}}

\newcommand{\lJump}{[\![}
\newcommand{\rJump}{]\!]}
\newcommand{\jump}[1]{[\![ #1]\!]}

\newcommand{\sd}{\bsigma^{\Delta}}
\newcommand{\rd}{\brho^{\Delta}}

\newcommand{\eps}{\epsilon}

\newcommand{\dd}{\underline{{\mathbf d}}}
\newcommand{\C}{\rm I\kern-.5emC}
\newcommand{\R}{\rm I\kern-.19emR}
\newcommand{\W}{{\mathbf W}}
\def\3bar{{|\hspace{-.02in}|\hspace{-.02in}|}}
\newcommand{\A}{{\mathcal A}}

\def\X{{\mathbb{X}}}

\newcommand{\BNB}{{Banach-Nec\v{a}s-Babu\v{s}ka }}

\title [Coerciveness Proofs of FOSLS for Elliptic PDEs]{  
Several Proofs of Coerciveness of First-Order System Least-Squares Methods for General Second-Order Elliptic PDEs}
\author{Shun Zhang}
\address{Department of Mathematics, City University of Hong Kong, Kowloon Tong, Hong Kong SAR, China}
\email{shun.zhang@cityu.edu.hk}
\date{\today}
\maketitle

\begin{abstract} In this paper, we present proofs of the coerciveness of first-order system least-squares methods for general (possibly indefinite) second-order linear elliptic PDEs under a minimal uniqueness assumption. 
For general linear second-order elliptic PDEs, the uniqueness, existence, and well-posedness are equivalent due to the compactness of the operator and Fredholm alternative. Thus only a minimal uniqueness assumption is assumed: the homogeneous equation has a unique zero solution. The coerciveness of the standard variational problem is not required. 
The paper's main contribution is our first proof, which is a straightforward and short proof using the inf-sup stability of the standard variational formulation. The proof can potentially be applied to other equations or settings once having the standard formulation's stability. 
We also present two other proofs for the least-squares methods of general second-order linear elliptic PDEs. The second proof is based on a lemma introduced in the discontinuous Petrov-Galerkin method, and the third proof is based on various stability analyses of the decomposed problems. 
As an application, we also discuss least-squares finite element methods for problems with a nonsingular $H^{-1}$ right-hand side.
\end{abstract}

%\begin{keywords}
%second-order elliptic equation, first-order system least-squares, finite element method, coerciveness, a priori and a posteriori error estimates.
%\end{keywords}

\section{Introduction} 

The least-squares variational principle and the corresponding least-squares finite element methods based on a first-order system reformulation have been widely used in numerical solutions of partial differential equations, see for example \cite{CLMM:94,CMM:97,BG:98,Jiang:98,BG:09,CMM:03,CS:04,CLW:04,Ku:07,LSQoI:14,CFZ:15}. Compared to the standard variational formulation and the related finite element methods, the first-order system least-squares finite element methods have several known advantages: (1) The least-squares continuous problem is well-posed as long as the linear PDE is well-posed.  (2) The discrete problem is stable and optimal accurate as long as the discrete spaces are the subspaces of the corresponding abstract solution spaces. (3) The resulting linear systems are symmetric positive definite. Fast solvers like multigrid can be used to solve the discrete problem. (4) The least-squares functional is a good error indicator/estimator for the finite element mesh refinement and the error control. (5) The boundary condition can be easily handled in the least-squares formulation with strong or weak enforcement.

The key to establishing the well-posedness and the a priori and a posteriori error estimates of the first-order system least-squares finite element methods is the coerciveness of the corresponding bilinear forms. The simplest way uses the least-squares graph norm as the norm of choice, then the uniqueness of the PDE can be used to show the well-posedness of the least-squares system. Such analysis is too crude and often only be used for least-squares bilinear forms we have less understanding of, see \cite{LZ:18,LZ:19,QZ:20}.

For problems like linear second-order elliptic PDEs, a more refined analysis is often needed to establish the norm equivalence: the equivalence of the least-squares norm and the standard Sobolev norms of the unknowns. For $L^2$-based first-order system least-squares methods for second-order elliptic equations with a new variable flux in $H(\divvr)$ space, the norm equivalence of the least-squares norm and the $H(\divvr)\times H^1$-norm is needed. For the $H^{-1}$-based first-order system least-squares method, the norm equivalence of the least-squares norm and the $L^2(\O)^d \times H^1$-norm is needed. The continuity of the least-squares bilinear form is often easy to check, so the main task is to prove the coerciveness of the least-squares bilinear form in the corresponding norms. For elliptic problems with a coercive standard variational formulation, the coerciveness proof is often quite simple with the help of the Poincar\'e inequality. The proof is subtler for general elliptic equations with possible indefinite weak problems.

\subsection{Review of past coerciveness proofs of least-squares methods for general second-order linear elliptic PDEs}
For the general second-order elliptic equation, there are various coerciveness proofs available in the past thirty years for first-order system least-squares methods. In \cite{CLMM:94}, the main assumption is the uniqueness of the solution of the linear elliptic equation, which is a very general setting that includes a real Helmholtz problem (whose standard variational formulation is naturally indefinite). The coerciveness is established for the $L^2$-based first-order system least-squares method. In the proof of \cite{CLMM:94}, the key tool is a norm equivalence ((3.5) of \cite{CLMM:94}) based on the bijectivity of the linear elliptic operator. Later in \cite{BLP:97}, in the same general setting,  the uniqueness of the elliptic PDE, the coerciveness of the $H^{-1}$-based first-order system least-squares method is established based on a compactness argument. In Cai's lecture notes \cite{Cai:04}, based on the \cite{BLP:97}'s arguments, the coerciveness of the $L^2$-based first-order system least-squares method is established in the same general setting, see also \cite{CFZ:15}. In \cite{Ku:07}, the coerciveness is established based on the a priori stability estimate of the general elliptic PDE. For general linear second-order elliptic PDEs, the uniqueness, existence, and well-posedness are equivalent due to the compactness of the operator and Fredholm alternative, so these three proofs are essentially equivalent. One possible shortcoming of \cite{BLP:97,Cai:04,CFZ:15} (also  the proofs of \cite{CLMM:94,Ku:07}) is that it is a proof by contradiction. When applied to problems with mesh-dependent settings, the indirect proof can not show that the coercivity constant is independent of the mesh.

In a more restrictive setting, where some assumptions on the coefficients are assumed to ensure the standard variational formulation is coercive, the coerciveness of the $L^2$-based first-order system least-squares method was proved in \cite{PCV:96}. Such a setting excludes problems like the real Helmholtz problems. 

In recent years, several new ideas emerged to prove the coerciveness of first-order system least-squares methods. In \cite{CQ:17}, based on a priori estimates of two separated problems with the terms of the least-squares functionals as right-hand sides, the coerciveness is established for the Helmholtz equation in a complex setting. %Such an idea originally appeared in $L^2$-analysis of the first-order system least-squares method \cite{CK:06} and analysis of discontinuous Petrov-Galerkin for Helmholtz problems \cite{DGMZ:12}. 
The same argument is used in \cite{BM:20} for the self-adjoint diffusion-reaction problems. In \cite{CFLQ:14}, a new proof technique based on a lemma introduced in the discontinuous Petrov-Galerkin method \cite{GQ:14} is used to prove the coerciveness of specially designed least-squares finite element methods with weakly enforced boundary conditions. %Compared to the standard arguments, this new technique has more flexibility when choosing test functions.

The analysis for finite element approximations to the possible indefinite general second-order elliptic equations is more complicated than the coercive cases. In \cite{Sch:74,SW:96}, the conforming finite element approximation of the general elliptic equations are discussed. Recently, in \cite{CL:99,CDNP:16,CNP:22}, nonconforming and mixed finite element approximations are discussed. All the results require some regularity and the mesh size of the discretization is small enough. Compared to the conforming, nonconforming, and mixed methods, the least-square finite element method for the general elliptic equations has an extra advantage: as long as the finite element spaces are conforming subspaces of the abstract solution spaces, the discrete problem is coercive without any restriction on the mesh size.

\subsection{Contribution of this paper}
Due to the equivalence of uniqueness, existence, and well-posedness of the second-order elliptic PDE, we only assume a {\it minimal uniqueness assumption: the homogeneous equation has a unique zero solution}. 
Three proofs for the coerciveness of two formulations of the first-order system least-squares methods for general second-order elliptic PDEs under the minimal uniqueness assumption are presented. One formulation is based on the diffusion flux reformulation, and the other one is based on a total physical flux reformulation. Both the $L^2$ and $H^{-1}$ first-order system least-squares methods are discussed. 

The paper's main contribution is a very short and simple proof presented in Section 3. The central part of the proof is only two to three lines. The main component of the proof is the inf-sup stability of the standard weak formulation, which is guaranteed by the minimal assumption.  Besides, this proof has the potential to be applied to other cases and scenarios, where the standard compactness/proof of contradiction tricks in \cite{CLMM:94,BLP:97,Cai:04,CFZ:15,Ku:07} may not be applicable, but an inf-sup stable bilinear form is available. 

We also generalize the proofs in \cite{CFLQ:14,GQ:14,CQ:17,BM:20} to the general second-order elliptic equation in this paper.
The second proof is based on a lemma introduced in the discontinuous Petrov-Galerkin method \cite{GQ:14,CFLQ:14}. The third proof is based on the a priori estimate of the PDEs and the proof idea from \cite{CQ:17,BM:20}. Both proofs require the minimal uniqueness assumption only. They are more complicated than the first proof. On the other hand, different proofs provide different possible directions of generalizations. Thus they are still presented in the paper.

As an application, we discuss least-squares finite element methods for the seconder order elliptic equation with an $H^{-1}$ right-hand side in the paper. The key to setting up a first-order system  is to choose the new vector variable carefully so it is in the $H(\divvr)$ space. Contrary to the usual global regularity assumption, we derive the local optimal a priori and a posteriori error estimates in the spirit of \cite{CHZ:17,Zhang:20mixed}. In \cite{FHK:21}, least-squares methods with singular righthand sides are discussed. We do not pursue this direction in the paper.

\subsection{Outline, notations, and function spaces}
The paper is organized as follows. In the remaining part of this subsection, we present notations and the function spaces.  In Section 2 we introduce mathematical equations for the second-order scalar elliptic partial differential equations and discuss the solution's existence, uniqueness, and well-posedness. Two variants of the first-order systems are presented. Then we set up the corresponding $L^2$ and $H^{-1}$ least-squares functionals and give sufficient conditions to ensure the coerciveness to avoid repeating. In Section 3, the first simple proof based on the inf-sup stability is presented. The second and third proofs are presented in Sections 4 and 5, respectively. Section 6 discusses the least-squares finite element method for the elliptic equation with an $H^{-1}$ right-hand side.

\setcounter{equation}{0}
Let $\O$ be a bounded, open, connected subset of $\mathbb{R}^d (d = 2 \mbox{ or } 3)$ with a Lipschitz continuous boundary $\p\O$. Denote by $\bn = (\bn_1, ... , \bn_d)^T$ the outward unit vector normal to the boundary. We partition the boundary of the domain $\O$ into two open subsets $\G_D$ and $\G_N$ such that $\p\O = \overline{\G_D} \cup \overline{\G_N}$ and $\G_D\cap \G_N =\emptyset$. For simplicity, we assume that $\G_D$ is not empty (i.e., $\mbox{meas}(\G_D) \neq 0$ ) and is connected.

We use the standard notations and definitions for the Sobolev spaces $H^s(\O)^d$ for $s\ge 0$. The standard associated inner product is denoted by $(\cdot , \,
\cdot)_{s,\O}$, and their respective norms are denoted by $\|\cdot \|_{s,\O}$ and
$\|\cdot\|_{s,\partial\O}$. The notation $|\cdot|_{s,\O}$ is used for semi-norms.  (We suppress the superscript $d$ because the dependence on dimension will be clear by context. We also omit the subscript $\O$ from the inner product and norm designation when there is no risk of confusion.) For $s=0$,
$H^s(\O)^d$ coincides with $L^2(\O)^d$.  The symbols $\gradt$ and $\nabla$ stand for the divergence and gradient operators, respectively. Set $H^1_D(\Omega):=\{v\in H^1(\Omega)\, :\, v=0\,\,\mbox{on }\Gamma_D\}$ and $H^1_0(\Omega):=\{v\in H^1(\Omega)\, :\, v=0\,\,\mbox{on }\p\O\}$.
Define $H(\divvr;\O)=\{\btau\in L^2(\O)^d\, :\,\gradt\btau\in L^2(\O)\}$, which is a Hilbert space under the norm $\|\btau\|_{H(\divvr;\,\O)}=\left(\|\btau\|_0^2+\|\gradt\btau\|_0^2 \right)^{1/2},
$
and define its subspace
$H_N(\divvr;\O)=\{\btau\in H(\divvr;\O)\, :\,\bn\cdot\btau=0 \,\,\mbox{on}\,\,\Gamma_N\}$. Since most of our discussion will be in $H_N(\divvr;\O)\times H_D^1(\O)$, we introduce the notation for simplicity,
\beq
\X:= H_N(\divvr;\O)\times H_D^1(\O).
\eeq
The space $(H^1_D(\O))'$ is defined as the dual space of $H_D^{1}(\O)$ and consists of the functionals $v$ for which the norm
\beq \label{def-1norm}
\|v\|_{-1} = \sup_{w\in H_D^1(\O)}\dfrac{\langle v,w\rangle_{(H^1_D(\O))'\times H_D^{1}(\O)}}{\|\nabla w\|_0}
\eeq
is finite.   When $\p\O=\Gamma_D$, then it is the usual $H^{-1}(\O)$; see for example, discussions in \cite{Evans:10,Brezis:11}. We use $\|\nabla \cdot\|_0$ instead of the standard $\| \cdot\|_1$ norm due to the Poincar\'{e} inequality.  
The space $(H^1_D(\O))'$ is a Hilbert space.
Let $S: (H^1_D(\O))'\rightarrow H_D^{1}(\O)$ to be the solution operator of the following problem:
\beq \label{Poisson}
z\in H_D^{1}(\O) \quad\mbox{with}\quad (\nabla z, \nabla v) = (f, v) \quad \forall v\in H_D^{1}(\O).
\eeq
That is, for $f\in  (H^1_D(\O))'$, $Sf = z\in H_D^{1}(\O)$ is the solution of \eqref{Poisson}. Again, we modify the setting in \cite{BLP:97} and only use the Poisson problem in its weak form.  In \cite{BLP:97}, the PDE version of $-\Delta u +u =f$ with boundary conditions is used. It is easy to see that the two settings are equivalent.
 We have the following result (Lemma 2.1 of \cite{BLP:97}): The inner product on $(H^1_D(\O))'\times (H^1_D(\O))'$ is given by 
\beq \label{inner-1}
(v, Sq) \quad \forall v,q \in (H^1_D(\O))'.
\eeq
We have $\|q\|_{-1}^2 = (q,Sq)$, for $q\in (H^1_D(\O))'$. 

Let $f=S^{-1}z$ and $v=z$ in \eqref{Poisson}, we have
\beq \label{wminusone}
\|\nabla z\|_0^2 = (S^{-1}z,z), \quad \forall z\in H^1_D(\O).
\eeq
Thus, for  $v\in  H^{1}_D(\O)$ and   $w = S^{-1}v\in  (H^1_D(\O))'$, we have
\beq \label{wv}
\|w\|_{-1}^2 = (w, Sw) = (S^{-1}v, SS^{-1}v) =(S^{-1}v, v) = \|\nabla v\|_0^2.
\eeq

By the Poincar\'e inequality, we also have the following result:
Assume $f\in L^2(\O)$, then
\beq\label{-1les0}
\|f\|_{-1} =\sup_{w\in H_D^1(\O)}\dfrac{(f,v)}{\|\nabla v\|_0} \leq C\sup_{w\in H_D^1(\O)}\dfrac{(f,v)}{\|v\|_0} \leq C\|f\|_0.
\eeq
For  $\btau \in H_N(\divvr;\O)$, we also have
\beq\label{-1les02}
\|\gradt\btau\|_{-1} =\sup_{w\in H_D^1(\O)}\dfrac{(\gradt\btau,v)}{\|\nabla v\|_0} = \sup_{w\in H_D^1(\O)}\dfrac{(\btau,\nabla v)}{\|\nabla v\|_0} \leq \|\btau\|_0.
\eeq

%\begin{proof}
%In the distributional sense, $\gradt\bff \in (H^1_D(\O))'$, and we have
%$$
%\langle \gradt\bff, v\rangle_{(H^1_D(\O))'\times H^1_D(\O)} = - (\bff, \nabla v) \quad \forall  v\in H^1_D(\O).
%$$
%Thus
%\beq
%\|\gradt \bff\|_{-1} = \sup_{v\in H^1_D(\O)} \dfrac{\langle \gradt\bff, v\rangle_{(H^1_D(\O))'\times H^1_D(\O)}}{\|\nabla v\|_0} =  \sup_{v\in (H^1_D(\O)} \dfrac{(\bff, \nabla v)}{\|\nabla v\|_0} \leq \|\bff\|_0.
%\eeq
%\end{proof}

\section{General second-order elliptic equations and their least-squares methods}
\setcounter{equation}{0}

\subsection{General second-order elliptic equations}
For $f\in L^2(\O)$ is a given scalar function, consider the general second-order elliptic equation in divergence form
\beq \label{pde1}
\begin{array}{rcl}
-\gradt(A \nabla u) +\bb\cdot \nabla  u + c u &=&f  \mbox{ in } \O, \\[1mm]
u &=& 0 \mbox{ on } \Gamma_D, \\[1mm]
A \nabla u \cdot\bn &=& 0 \mbox{ on } \Gamma_N,
\end{array}
\eeq
and its adjoint problem (equation in physical form)
\beq \label{pde2}
\begin{array}{rcl}
-\gradt(A \nabla u+\bb u) + c u &=&f  \mbox{ in } \O, \\[1mm]
u &=& 0 \mbox{ on } \Gamma_D, \\[1mm]
(A \nabla u +\bb u)\cdot\bn &=& 0 \mbox{ on } \Gamma_N.
\end{array}
\eeq
We assume the following very mild conditions on the domain and coefficients.
\begin{assumption}\label{ass_dc}
The domain $\O$ is a bounded, open, connected subset of $\mathbb{R}^d (d = 2 \mbox{ or } 3)$ with a Lipschitz continuous boundary $\p\O$.

The diffusion coefficient matrix $A \in L^{\infty}(\O)^{d\times d}$ is a given $d\times d$ tensor-valued function;  the matrix $A$ is uniformly symmetric positive definite: there exist positive constants $0 < \Lambda_0 \leq \Lambda_1$ such that
\beq\label{A}
\Lambda_0 \by^T\by \leq \by^T A \by \leq \Lambda_1 \by^T\by
\eeq
for all $\by\in \mathbb{R}^d$ and almost all $x\in \O$. 
The coefficients $\bb \in L^{\infty}(\O)^d$ and  $c\in L^{\infty}(\O)$ are given vector- and scalar-valued bounded functions, respectively.
\end{assumption}
For simplicity, we define
\beq\label{Yst}
Xv = \bb\cdot \nabla + c v.
\eeq

Define the following bilinear form:
\beq
a(w,v) = (A\nabla w,\nabla v) +(X w,v) \quad w,v \in H^1_D(\O).
\eeq
It is easy to check that the bilinear form is continuous due to Assumption \ref{ass_dc}:
\beq
a(w,v) \leq C\|\nabla v\|_0 \|\nabla w\|_0 \quad v\in  H^1_D(\O) \mbox{ and } w\in  H^1_D(\O). 
\eeq

\begin{rem}
When $\bb=0$ and $c = -k^2$ for some $k>0$, the equation is a real Helmholtz equation. We have $a(v,v) = \|A^{1/2}\nabla v\|_0^2 -\|kv\|_0^2$, which is indefinite for a large $k$. However, as long as $k^2$ square is not an eigenvalue of $(A\nabla v,\nabla w)$, the equation still has a unique solution.
\end{rem}

%The variational problem of \eqref{pde1} is: Find $u \in H^1_D(\O)$, such that, 
%\beq\label{vp1}
%a(u,v) = (f,v) \quad\forall v\in H^1_D(\O).
%\eeq
%And the variational problem of \eqref{pde2} is: Find $u \in H^1_D(\O)$, such that, 
%\beq\label{vp2}
%a(v,u) = (f,v) \quad\forall v\in H^1_D(\O).
%\eeq
%It is easy to see that \eqref{pde2} is the adjoint problem of \eqref{pde1}. 

\subsection{Existence, uniqueness, and well-posedness of the linear second-order elliptic PDE solution}
This subsection discusses the theories on the existence, uniqueness, and well-posedness of the second-order linear elliptic PDE. In order to handle the $H^{-1}$ least-squares formulations, we discuss the two problems with $(H^1_D(\O))'$ righthand sides. 

For a $g\in (H^1_D(\O))'$, assume that $u\in H^1_D(\O)$ is the solution of the weak problem of the equation in divergence form:
\beq\label{vp11}
\mbox{Find } u\in H^1_D(\O), \mbox{ such that }
a(u,v) = \langle g, v\rangle_{(H^1_D(\O))'\times H^1_D(\O)} \quad\forall v\in H^1_D(\O),
\eeq
or the adjoint weak problem of the equation in physical form,
\beq\label{vp22}
\mbox{Find } u\in H^1_D(\O), \mbox{ such that }
a(v,u) = \langle g, v\rangle_{(H^1_D(\O))'\times H^1_D(\O)}  \quad\forall v\in H^1_D(\O).
\eeq
For a possible indefinite linear second-order elliptic equation, the solution's existence and uniqueness theory is based on the Fredholm alternative. Since we assume the ellipticity of the PDEs (conditions \eqref{A} on  $A$), the operators associated with the divergence form and adjoint physical form problems are Fredholm operators of index zero. The uniqueness, existence, and well-posedness are equivalent. 
We collect the existence and uniqueness results of linear second-order elliptic equations in the following theorem. These results are standard and are known to the experts. We present here only for completeness and clarifications.

\begin{thm}\label{assp}
Assuming that Assumption \ref{ass_dc} is true, the following assumptions are equivalent:
\begin{enumerate}[(1)]

\item The homogeneous equation associated to \eqref{vp11}, i.e., $a(u,v) = 0$, for all $v\in H^1_D(\O)$, has $u=0$ as its unique solution.

\item 
The weak problem \eqref{vp11} has the following stability bound:
\beq \label{apriori1}
	\|\nabla u\|_0 \leq C\|g\|_{-1} \quad \forall g\in (H^1_D(\O))'.
\eeq

\item The weak problem \eqref{vp11} has a unique solution $u\in H^1_D(\O)$ for any $g\in (H^1_D(\O))'$. 

\item The homogeneous equation associated to \eqref{vp22}, i.e., $a(v,u) = 0$, for all $v\in H^1_D(\O)$, has $u=0$ as its unique solution.

\item 
The adjoint weak problem \eqref{vp22} has the following stability bound:
\beq \label{apriori2}
	\|\nabla u\|_0 \leq C\|g\|_{-1} \quad \forall g\in (H^1_D(\O))'.
\eeq

\item The adjoint weak problem \eqref{vp22} has a unique solution $u\in H^1_D(\O)$ for any $g\in (H^1_D(\O))'$.

%\item 
%Either \eqref{vp11} or its adjoint problem  \eqref{vp22} has the following a priori error estimate:
%\beq %\label{apriori}
%	\|\nabla u\|_0 \leq C\|g\|_{-1} \quad \forall g\in (H^1_D(\O))'.
%\eeq

%\item There exists some $c>0$, such that
%\beq
%\|-\gradt (A\nabla v)+Xv\|_{-1} \geq c \|\gradt (A\nabla v)\|_{-1}
%\quad \forall v\in H^1_D(\O).
%\eeq
\end{enumerate}
\end{thm}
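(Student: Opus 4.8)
The plan is to establish the chain of equivalences by a cyclic argument together with the duality between the divergence-form and physical-form problems. The Fredholm framework does the heavy lifting, so the task is mostly to wire together standard facts about Fredholm operators of index zero with the definition of the $\|\cdot\|_{-1}$ norm and the solution operator $S$. Concretely, I would prove $(1)\Leftrightarrow(3)$, $(3)\Rightarrow(2)$, $(2)\Rightarrow(1)$, and then the symmetric statements $(4)\Leftrightarrow(6)$, $(6)\Rightarrow(5)$, $(5)\Rightarrow(4)$ by the identical argument applied to the transposed bilinear form; finally I would close the loop between the two triples by observing that $(3)$ and $(6)$ are equivalent (indeed, the operator of \eqref{vp22} is the Banach-space adjoint of the operator of \eqref{vp11}).

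First I would recast \eqref{vp11} as an operator equation. Using the Riesz-type identification of $(H^1_D(\O))'$ furnished by $S$, define $\mathcal{L}\colon H^1_D(\O)\to H^1_D(\O)$ by $(\nabla \mathcal{L}u,\nabla v)=a(u,v)$ for all $v$; equivalently $\mathcal{L} = I + K$ where $K$ collects the lower-order terms $(Xu,v)$ and, because the embedding $H^1_D(\O)\hookrightarrow L^2(\O)$ is compact (Rellich, using Assumption \ref{ass_dc} and $\mathrm{meas}(\G_D)\neq 0$ for the Poincar\'e inequality), $K$ is a compact operator on $H^1_D(\O)$. Then $\mathcal{L}$ is Fredholm of index zero. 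Statement $(1)$ says $\ker\mathcal{L}=\{0\}$; by the Fredholm alternative this forces $\mathcal{L}$ to be bijective, which is exactly $(3)$, and conversely $(3)$ trivially implies $(1)$. Bijectivity of the bounded operator $\mathcal{L}$ on a Hilbert space gives, via the open mapping theorem, a bounded inverse, and translating $\|\mathcal{L}^{-1}\|<\infty$ back through the definition \eqref{def-1norm} of $\|g\|_{-1}$ and the identity \eqref{wv} yields the a priori bound \eqref{apriori1}; this is $(3)\Rightarrow(2)$. The implication $(2)\Rightarrow(1)$ is immediate: if $a(u,v)=0$ for all $v$ then $g=0$, so \eqref{apriori1} gives $\|\nabla u\|_0=0$, hence $u=0$ by Poincar\'e. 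This settles the first triple.

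For the second triple I would simply note that the map $v\mapsto u$ in \eqref{vp22} is governed by the operator $\mathcal{L}^{*}$, the Hilbert-space adjoint of $\mathcal{L}$ with respect to the $(\nabla\cdot,\nabla\cdot)$ inner product, since $a(v,u)=(\nabla v,\nabla\mathcal{L}^{*}u)$ by definition of the adjoint. Now $\mathcal{L}^{*}=I+K^{*}$ is again Fredholm of index zero, and the elementary fact $\dim\ker\mathcal{L}=\dim\ker\mathcal{L}^{*}$ for such operators shows $(1)\Leftrightarrow(4)$ directly. Repeating verbatim the argument of the previous paragraph with $\mathcal{L}$ replaced by $\mathcal{L}^{*}$ gives $(4)\Leftrightarrow(6)$ and $(6)\Rightarrow(5)\Rightarrow(4)$. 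Combining everything yields the full equivalence of $(1)$ through $(6)$.

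The only genuinely non-routine point is the justification that $K$ is compact, i.e., that the lower-order form $(Xu,v)=(\bb\cdot\nabla u + cu, v)$ induces a compact operator on $H^1_D(\O)$; this hinges on the compact Sobolev embedding and on the boundedness of $\bb,c$ from Assumption \ref{ass_dc}, and I would spell out that $v\mapsto$ (the $H^1_D$-representative of $(Xu,v)$) factors through $L^2(\O)$. Everything else -- the Fredholm alternative, $\dim\ker\mathcal{L}=\dim\ker\mathcal{L}^{*}$, the open mapping theorem, and the norm translations using $S$ and \eqref{wv}, \eqref{-1les0}, \eqref{-1les02} -- is standard and, importantly, gives a \emph{direct} (non-contradiction) derivation of the stability constants, which is the feature the paper wants to highlight.
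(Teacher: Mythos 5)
Your argument follows essentially the same route as the paper's first proof: both rest on the Fredholm alternative for a compact perturbation of an invertible operator, with (1)$\Leftrightarrow$(4) coming from the equality of kernel dimensions of an index-zero operator and its adjoint, the stability bounds (2),(5) coming from bounded invertibility, and the reverse implications being immediate. The difference is one of packaging: the paper cites \cite{BJS:64,GT:01} for the Fredholm facts, whereas you make the operator framework explicit by introducing $\mathcal{L}$ via the Riesz map on $H^1_D(\O)$; this is a reasonable and more self-contained presentation, and your remark that it yields the constants directly (rather than by contradiction) matches the paper's stated motivation. (The paper also offers a second proof via the \BNB inf-sup theory and the equality of the primal and adjoint inf-sup constants, which you do not use.)

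One small inaccuracy to repair: you write $\mathcal{L}=I+K$, but the principal part of $a$ is $(A\nabla u,\nabla v)$ with a general uniformly SPD $A\in L^\infty(\O)^{d\times d}$, not $(\nabla u,\nabla v)$. So the correct decomposition is $\mathcal{L}=\mathcal{A}+K$, where $\mathcal{A}$ is defined by $(\nabla\mathcal{A}u,\nabla v)=(A\nabla u,\nabla v)$ and is boundedly invertible by Lax--Milgram (using \eqref{A} and the Poincar\'e inequality), while $K$ is compact by the argument you sketch. Writing $\mathcal{L}=\mathcal{A}(I+\mathcal{A}^{-1}K)$ then shows $\mathcal{L}$ is Fredholm of index zero, and the rest of your argument goes through unchanged. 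This is a one-line fix, not a gap in the strategy.
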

We sketch two proofs.
\begin{proof} (I)
By the Fredholm alternative, the solution spaces of the two adjoint homogeneous problems have the same finite number of linearly independent solutions, see page 200 of \cite{BJS:64} or Theorem 5.11 of \cite{GT:01}. If one of them has only $0$ as its solution, so is the other, thus (1) $\Longleftrightarrow$ (4). 

From page 200  of \cite{BJS:64},  if (4) is assumed, then for any $g\in (H^1_D(\O))'$, the weak problem \eqref{vp11}  has a unique solution in $H_D^1(\O)$ since the requirement of the existence of a solution by the Fredholm alternative is that $\langle g, v\rangle =0$ for any $v\in H_D^1(\O)$ being the solution of homogeneous adjoint equation \eqref{vp22}, which is $0$. Then we get (4) $\Longrightarrow$ (3). Similarly, we get (1) $\Longrightarrow$ (6).

Again from the Fredholm alternative, we get (1) $\Longrightarrow$ (2) and 
(4) $\Longrightarrow$ (5), the a priori estimate.

On the other hand, if (2) is true, we immediately have (1) by letting $g=0$. Similarly, (5) $\Longrightarrow$ (4).

The conclusions (3) $\Longrightarrow$ (1) and (6) $\Longrightarrow$ (4) are obvious. Thus, we have the equivalence of the assumptions in the theorem.
%
%A proof of \eqref{apriori} for both problems is also given Lemma 2.2 of \cite{BLP:98} for some more general mixed boundary conditions. On the other hand, if we assuming \eqref{apriori} is true for one of \eqref{vp11}  and \eqref{vp22}, then one of the homogeneous  \eqref{vp11}  and \eqref{vp22} has a $0$ solution. Thus, we get the equivalence of 
%
\end{proof}

\begin{proof} (II)
The equivalence of (1)  $\Longleftrightarrow$ (2)  $\Longleftrightarrow$ (3) can also be proved directly without the help of the adjoint problem. Following the proof of Theorem 8.3 of \cite{GT:01}, we get (1) $\Longrightarrow$ (3). (1) $\Longrightarrow$ (2) is a result of the Fredholm alternative. (2) $\Longrightarrow$ (1)  and (3) $\Longrightarrow$ (1) are obvious. Similarly, we have (4)  $\Longleftrightarrow$ (5)  $\Longleftrightarrow$ (6). We only need to show the equivalence of (1-3) and (4-6).

From the \BNB theory \cite{Babuska:71,Braess:07,XZ:03}, the problem \eqref{vp11}, which has a continuous bilinear form, is well-posed if and only  the following conditions hold:
\beq\label{infsup}
\inf_{v\in H_D^1(\O)}\sup_{w\in H_D^1(\O)} \dfrac{a(v,w)}{\|\nabla v\|_0 \|\nabla w\|_0} >0, \quad
\inf_{w\in H_D^1(\O)}\sup_{v\in H_D^1(\O)} \dfrac{a(v,w)}{\|\nabla v\|_0 \|\nabla w\|_0} >0.
\eeq
furthermore if \eqref{infsup} holds, then
\beq\label{bnb}
\inf_{v\in H_D^1(\O)}\sup_{w\in H_D^1(\O)} \dfrac{a(v,w)}{\|\nabla v\|_0 \|\nabla w\|_0} =
\inf_{w\in H_D^1(\O)}\sup_{v\in H_D^1(\O)} \dfrac{a(v,w)}{\|\nabla v\|_0 \|\nabla w\|_0} = \beta >0.
\eeq
Thus, we immediately get \eqref{vp22} is also well-posed and thus 
(1-3) $\Longleftrightarrow$ (4-6). The proof of the opposite direction is identical.
\end{proof}

\begin{rem}
The inf-sup condition \eqref{infsup} or \eqref{bnb} is also equivalent to
\beq\label{is}
\beta \|\nabla v\|_0 \leq \sup_{w\in H_D^1(\O)} \dfrac{a(v,w)}{\|\nabla w\|_0}
\quad\mbox{and}\quad
\beta \|\nabla v\|_0 \leq \sup_{w\in H_D^1(\O)} \dfrac{a(w,v)}{\|\nabla w\|_0}
\quad \forall v\in H_D^1(\O),
\eeq
for some $\beta >0$.
These conditions are also equivalent to the stability estimates \eqref{apriori1} and \eqref{apriori2}.
\end{rem}

\begin{rem}
The \BNB theory is established for general linear problems in the Banach/Hilbert spaces setting, which states that the problem is well-posed if and only if the bilinear form is continuous and satisfies the inf-sup conditions \eqref{infsup}. For second-order elliptic equations, due to the compactness and Fredholm alternative, we only need one condition from Theorem \ref{assp}.
\end{rem}

\begin{rem}
The stability constant $\beta$ in \eqref{bnb} is mesh-independent, but it can be very close to zero. For example, for the Helmholtz equation with $\bb=0$ and $-c$ is very close to the eigenvalue of $(A\nabla v,\nabla w)$.
\end{rem}
\begin{rem}
A proof of (1) $\Longrightarrow$ (3) and (4) $\Longrightarrow$ (6) can be found in  Lemma 2.2 of \cite{BLP:98} for some more general mixed boundary conditions with a compactness argument.
\end{rem}

\subsection{Least-squares minimization problems}
For the elliptic equation in the divergence form \eqref{pde1}, let the flux $\bsigma = -A \nabla u$.
We have the  first-order system:
\beq \label{fosys1}
\left\{
\begin{array}{rclll}
\bsigma + A \nabla u &=&0& \mbox{ in } \O,
\\[1mm]
\gradt\bsigma +Xu &=&f&  \mbox{ in } \O,\\
u &=& 0 &\mbox{ on } \Gamma_D, \\%[1mm]
\bsigma \cdot\bn &=& 0& \mbox{ on } \Gamma_N.
\end{array}
\right.
\eeq
For $u\in H^1_D(\O)$, we have  $\bsigma = -A \nabla u \in L^2(\O)^d$ and $\gradt\bsigma  = f- Xu\in L^2(\O)$, so $(\bsigma,u)\in \X = H_N(\divvr;\O)\times H_D^1(\O) $.
For $(\btau,v)\in \X$, define least-squares functionals for the system \eqref{fosys1},
\begin{eqnarray}
L(\btau,v;f) &:=& \|A^{-1/2}\btau + A^{1/2}\nabla v\|_0^2 +\|\gradt\btau +Xv-f \|_0^2,\\[1mm]
L_{-1}(\btau,v;f) &:=& \|A^{-1/2}\btau + A^{1/2}\nabla v\|_0^2 +\|\gradt\btau +Xv-f \|_{-1}^2.
\end{eqnarray}
The corresponding least-squares minimization problems are:
\begin{eqnarray}
\mbox{Find } (\bsigma,u)\in \X  &\mbox{ s.t. }&
L(\bsigma,u;f) = \inf_{(\btau,v)\in \X}L(\btau,v;f) \\
\mbox{and find } (\bsigma,u)\in \X  &\mbox{ s.t. }&
L_{-1}(\bsigma,u;f) = \inf_{(\btau,v)\in \X}L_{-1}(\btau,v;f). 
\end{eqnarray}
For the elliptic equation in the physical form \eqref{pde2}, define the total flux as $\bsigma = -A \nabla u-\bb u$, then
\beq \label{fosys2}
\left\{
\begin{array}{rllll}
\bsigma +A \nabla u+\bb u&=&0& \mbox{ in } \O,\\[1mm]
\gradt\bsigma + c u &=&f&  \mbox{ in } \O,\\
u &=& 0 &\mbox{ on } \Gamma_D, \\%[1mm]
\bsigma \cdot\bn &=& 0& \mbox{ on } \Gamma_N.
\end{array}
\right.
\eeq
Similarly, we have  $(\bsigma,u)\in \X$.
For $(\btau,v)\in \X$, define least-squares functionals for the system \eqref{fosys2},
\begin{eqnarray}
J(\btau,v;f) &:=& \|\btau + A\nabla v+\bb v\|_0^2 +\|\gradt\btau + c v-f \|_0^2,\\
J_{-1}(\btau,v;f) &:=& \|\btau + A\nabla v+\bb v\|_0^2 +\|\gradt\btau + c v-f \|_{-1}^2.
\end{eqnarray}

\begin{rem}
We discuss the problem  \eqref{pde2} independently since the total physical flux is different from the flux \eqref{pde1} and has many applications. The introduction of the total flux in the least-squares context can be found in \cite{LTV:97}.
\end{rem}

For any $(\btau,v)\in L^2(\O)^d\times H^1_D(\O)$ and $(\brho,w) \in \X$,
define the following two norms, respectively: 
\beq
\|(\btau,v)\|^2 := \|\btau\|_0^2+\|\nabla v\|_0^2 \quad\mbox{and}\quad
\tri(\brho,w)\tri^2 := \|\brho\|_0^2+\|\gradt \brho\|_0^2+ \|\nabla w\|_0^2.
\eeq
\begin{rem}
By the Poincar\'e inequality, we can also replace $\|\nabla v\|_0$ by a standard  $\|v\|_1$ in the above norms. We use the norm  $\|\nabla v\|_0$ for simplicity.
\end{rem}

From the Cauchy-Schwarz, triangle, and Poincar\'e inequalities and the bounds of the coefficients $(A, \bb$, and $c)$ of the underlying problems (Assumption \ref{ass_dc}), we immediately have the following upper bounds: There exists $C>0$, such that, 
\begin{eqnarray}
L(\btau,v;0)  \leq C\tri(\btau,v)\tri^2 \quad \mbox{and}\quad
J(\btau,v;0)  \leq  C\tri(\btau,v)\tri^2, \quad \forall (\btau,v)\in \X.
\end{eqnarray}
Together with \eqref{-1les0} and \eqref{-1les02}, we can show that there exists $C>0$, such that,
\begin{eqnarray}
L_{-1}(\btau,v;0)  \leq C\|(\btau,v)\|^2 \quad \mbox{and}\quad
J_{-1}(\btau,v;0)  \leq  C\|(\btau,v)\|^2, \quad \forall (\btau,v)\in \X.
\end{eqnarray}

In this paper, we want to prove the following coerciveness results: For all $(\btau,v)\in \X$, there exists $C>0$, such that,
\begin{eqnarray}\label{cov_L}
 C\tri(\btau,v)\tri^2 &\leq & L(\btau,v;0), \quad %\\ \label{cov_L-1}
C\|(\btau,v)\|^2  \leq   L_{-1}(\btau,v;0), \\ \label{cov_J}
C\tri(\btau,v)\tri^2  &\leq &  J(\btau,v;0),\quad %\\ \label{cov_J-1}
C\|(\btau,v)\|^2  \leq   J_{-1}(\btau,v;0). 
\end{eqnarray}
\begin{rem}
If the coercive results \eqref{cov_L} to \eqref{cov_J} hold, then we have the following equivalence:
\begin{eqnarray}
C_1 L(\btau,v;0) \leq J(\btau,v;0) \leq C_2 L(\btau,v;0) \quad (\btau,v)\in \X;\\
C_4 L_{-1}(\btau,v;0) \leq J_{-1}(\btau,v;0) \leq C_4 L_{-1}(\btau,v;0) \quad (\btau,v)\in \X.
\end{eqnarray}
\end{rem}
Before discussing the complete coerciveness proofs, we introduce some conditions that ensure coerciveness. These conditions are elementary and well-known. We list them here to avoid repeating them in future proofs. 
\begin{thm} \label{thm_part}
For  $(\btau,v)\in \X  $,  we have
\begin{eqnarray}\label{c_cov_L}
 C\|\nabla v\|_0^2 \leq  L(\btau,v;0) &\Longrightarrow&
 C\tri(\btau,v)\tri^2 \leq  L(\btau,v;0), \\ \label{c_cov_L-1}
  C\|\nabla v\|_0^2 \leq  L_{-1}(\btau,v;0) &\Longrightarrow& 
C\|(\btau,v)\|^2  \leq   L_{-1}(\btau,v;0), \\ \label{c_cov_J}
 C\|\nabla v\|_0^2 \leq  J(\btau,v;0) &\Longrightarrow&
C\tri(\btau,v)\tri^2  \leq   J(\btau,v;0), \\ \label{c_cov_J-1}
 C\|\nabla v\|_0^2 \leq  J_{-1}(\btau,v;0) &\Longrightarrow&
C\|(\btau,v)\|_0^2  \leq   J_{-1}(\btau,v;0). 
\end{eqnarray}
\end{thm}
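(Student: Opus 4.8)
The plan is to observe that each of the four implications is an elementary consequence of the triangle inequality, the uniform bounds on $A$, $\bb$, $c$ from Assumption~\ref{ass_dc}, and the Poincar\'e inequality; once one knows that $\|\nabla v\|_0$ is controlled by the least-squares functional --- which is exactly the hypothesis of the theorem --- everything else follows with no extra input. The genuinely substantive estimate, namely the bound $C\|\nabla v\|_0^2\le L(\btau,v;0)$ and its three analogues, is what the later sections carry out; Theorem~\ref{thm_part} merely isolates it as the only nontrivial ingredient needed for the full coerciveness results \eqref{cov_L}--\eqref{cov_J}.

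For \eqref{c_cov_L} I would start from the hypothesis $C\|\nabla v\|_0^2\le L(\btau,v;0)$ and bound the two remaining pieces of $\tri(\btau,v)\tri^2$ in turn. Since $A$ is uniformly symmetric positive definite, $\|\btau\|_0\le C\|A^{-1/2}\btau\|_0\le C\big(\|A^{-1/2}\btau+A^{1/2}\nabla v\|_0+\|A^{1/2}\nabla v\|_0\big)\le C\big(L(\btau,v;0)^{1/2}+\|\nabla v\|_0\big)$, so the hypothesis gives $\|\btau\|_0^2\le C\,L(\btau,v;0)$. For the divergence, $\|\gradt\btau\|_0\le\|\gradt\btau+Xv\|_0+\|Xv\|_0$, and since $Xv=\bb\cdot\nabla v+c v$ with $\bb,c\in L^\infty(\O)$ and $\|v\|_0\le C\|\nabla v\|_0$ by Poincar\'e, we get $\|Xv\|_0\le C\|\nabla v\|_0$, hence $\|\gradt\btau\|_0^2\le C\,L(\btau,v;0)$. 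Summing the three estimates $\|\btau\|_0^2,\ \|\gradt\btau\|_0^2,\ \|\nabla v\|_0^2\le C\,L(\btau,v;0)$ yields $\tri(\btau,v)\tri^2\le C\,L(\btau,v;0)$.

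The remaining three implications are variants of the same computation. For \eqref{c_cov_J} one replaces the first residual by $\btau+A\nabla v+\bb v$, whose lower-order part $A\nabla v+\bb v$ is again bounded by $C\|\nabla v\|_0$, and the second residual by $\gradt\btau+c v$ with $\|c v\|_0\le C\|\nabla v\|_0$, obtaining $\tri(\btau,v)\tri^2\le C\,J(\btau,v;0)$. For the $H^{-1}$ versions \eqref{c_cov_L-1} and \eqref{c_cov_J-1}, the first residuals of $L_{-1}$ and $J_{-1}$ coincide with those of $L$ and $J$, so $\|\btau\|_0^2\le C\,L_{-1}(\btau,v;0)$ (resp.\ $\le C\,J_{-1}(\btau,v;0)$) follows exactly as above; combined with the hypothesis $\|\nabla v\|_0^2\le C\,L_{-1}(\btau,v;0)$ this already gives $\|(\btau,v)\|^2=\|\btau\|_0^2+\|\nabla v\|_0^2\le C\,L_{-1}(\btau,v;0)$, and similarly for $J_{-1}$ --- here the $H^{-1}$ residual term is not even needed. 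There is essentially no obstacle: the only point to watch is that every term occurring inside a least-squares functional other than $\btau$, $\gradt\btau$, $\nabla v$ is a lower-order quantity controlled by $\|\nabla v\|_0$ via boundedness of the coefficients and Poincar\'e, so that the hypothesis can absorb it; and all constants produced depend only on $\Lambda_0,\Lambda_1,\|\bb\|_{L^\infty},\|c\|_{L^\infty}$ and the Poincar\'e constant, hence are independent of any discretization.
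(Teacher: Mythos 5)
Your proposal is correct and follows essentially the same route as the paper's own proof: triangle inequality to peel off the residuals, uniform bounds on $A$, $\bb$, $c$ plus the Poincar\'e inequality to control the lower-order terms $Xv$, $A\nabla v+\bb v$, $cv$ by $\|\nabla v\|_0$, and the observation that the $H^{-1}$ cases only require bounding $\|\btau\|_0$ from the (unchanged) first residual. Your write-up is in fact slightly more careful than the paper's, which states the argument only for \eqref{c_cov_L} and leaves the other three cases as ``similarly.''
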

\begin{proof}
From the triangle and Poincar\'e inequalities, we have
\begin{eqnarray*}
\|\gradt \btau\|_0 &\leq& \|\gradt \btau + Xv\|_0 + \|Xv\|_0 
\leq  \|\gradt \btau + Xv\|_0 + C\|\nabla v\|_0+ C\|\btau\|_0 \\
\mbox{and   }\|\btau\|_0 &\leq&  \|\btau +A\nabla v\|_0 + C\|\nabla v\|_0.
\end{eqnarray*}
Thus, if the first inequality of \eqref{c_cov_L} is true, we have the coerciveness. The rest results in the theorem can be proved similarly.
\end{proof}

\section{Proof I: A Simple Direct Proof from the Inf-Sup Stability}
\setcounter{equation}{0}
In this section, we present a simple proof of the coerciveness of the least-squares functionals.
\begin{thm}
Assuming one of the conditions of Theorem \ref{assp} is true, the coercive results in \eqref{cov_L} and \eqref{cov_J} hold.
\end{thm}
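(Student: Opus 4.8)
By Theorem~\ref{thm_part}, it suffices to bound $\|\nabla v\|_0^2$ by the least-squares functional in each of the four cases; I will carry out the argument for $L(\btau,v;0)$ and indicate how the remaining three are identical. Fix $(\btau,v)\in\X$ and set
$\br := A^{-1/2}\btau + A^{1/2}\nabla v$ and $\phi := \gradt\btau + Xv$, so that
$L(\btau,v;0) = \|\br\|_0^2 + \|\phi\|_0^2$. The idea is to feed $v$ itself (or a suitable test function) into the inf-sup bound \eqref{is} for the bilinear form $a(\cdot,\cdot)$, and then rewrite $a(v,w)$ purely in terms of $\br$ and $\phi$. Writing $A^{1/2}\nabla v = \br - A^{-1/2}\btau$, for any $w\in H^1_D(\O)$ we have
\beq
a(v,w) = (A\nabla v,\nabla w) + (Xv,w)
       = (A^{1/2}\nabla v, A^{1/2}\nabla w) + (Xv,w)
       = (\br, A^{1/2}\nabla w) - (\btau,\nabla w) + (Xv,w).
\eeq
Now $-(\btau,\nabla w) = -(\gradt\btau, w) = (Xv - \phi, w) - (Xv,w)\cdot 0$; more precisely $-(\btau,\nabla w) = -(\gradt\btau,w)$ because $w\in H^1_D(\O)$ and $\btau\cdot\bn = 0$ on $\Gamma_N$, and $\gradt\btau = \phi - Xv$, so $-(\btau,\nabla w) = (Xv,w) - (\phi,w)$. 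Substituting,
\beq
a(v,w) = (\br, A^{1/2}\nabla w) + (Xv,w) - (\phi,w) - (Xv,w)\cdot 0
       = (\br, A^{1/2}\nabla w) - (\phi,w).
\eeq
Hence, using $\|A^{1/2}\nabla w\|_0 \le \Lambda_1^{1/2}\|\nabla w\|_0$ and the Poincar\'e inequality $\|w\|_0 \le C_P\|\nabla w\|_0$, we get $|a(v,w)| \le \big(\Lambda_1^{1/2}\|\br\|_0 + C_P\|\phi\|_0\big)\|\nabla w\|_0$, so that
$\sup_{w}\,a(v,w)/\|\nabla w\|_0 \le C\big(\|\br\|_0 + \|\phi\|_0\big)$.

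Combining this with the inf-sup bound $\beta\|\nabla v\|_0 \le \sup_w a(v,w)/\|\nabla w\|_0$ from \eqref{is} (which holds under condition~(1) of Theorem~\ref{assp}) yields $\beta\|\nabla v\|_0 \le C(\|\br\|_0 + \|\phi\|_0) \le C\,L(\btau,v;0)^{1/2}$, i.e. $C\|\nabla v\|_0^2 \le L(\btau,v;0)$, and Theorem~\ref{thm_part} finishes the $L$ case. For $L_{-1}$ one repeats the computation but keeps $-(\phi,w)$ paired against $w\in H^1_D(\O)$ and estimates $|(\phi,w)| \le \|\phi\|_{-1}\|\nabla w\|_0$ directly from the definition \eqref{def-1norm}, so the same bound holds with $\|\phi\|_0$ replaced by $\|\phi\|_{-1}$. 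For the physical-form functionals $J$ and $J_{-1}$ one uses the second inf-sup inequality in \eqref{is} (equivalently the adjoint bilinear form $a(w,v)$): with $\br := \btau + A\nabla v + \bb v$ and $\phi := \gradt\btau + cv$, an analogous integration-by-parts identity gives $a(w,v) = (\br,\nabla w) - (\phi,w)$ (up to the bounded coefficient factors), and the same two-line estimate applies.

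The only genuine point requiring care — and the step I expect to be the "main obstacle," though it is really just bookkeeping — is the integration by parts $-(\btau,\nabla w) = -(\gradt\btau,w)$: this is exactly where the conforming choice $(\btau,v)\in\X = H_N(\divvr;\O)\times H^1_D(\O)$ is used, since the boundary term $\langle\btau\cdot\bn, w\rangle_{\p\O}$ vanishes because $w = 0$ on $\Gamma_D$ and $\btau\cdot\bn = 0$ on $\Gamma_N$. Everything else is Cauchy--Schwarz, the ellipticity bounds \eqref{A}, and Poincar\'e. Note the argument is completely direct: no compactness, no contradiction, and the constant is $C = C(\beta,\Lambda_1,C_P)$, so it is mesh-independent and survives verbatim on any conforming discrete subspace of $\X$.
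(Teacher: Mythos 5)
Your argument is essentially the paper's own proof: you rewrite $a(v,w)$ (and $a(w,v)$) in terms of the least-squares residuals via the integration-by-parts identity $(\btau,\nabla w)+(\gradt\btau,w)=0$ on $H_N(\divvr;\O)\times H^1_D(\O)$, bound the result by Cauchy--Schwarz and Poincar\'e, and invoke the inf-sup stability \eqref{is} together with Theorem~\ref{thm_part}. The only blemish is a harmless sign slip --- integration by parts gives $-(\btau,\nabla w)=+(\gradt\btau,w)$, so the identity should read $a(v,w)=(\br,A^{1/2}\nabla w)+(\phi,w)$ rather than $-(\phi,w)$ --- which does not affect the estimate since you immediately pass to absolute values.
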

\begin{proof}
By the integration by parts, we have $(\btau, \nabla w)+(\gradt\btau,w) =0$ for any $(\btau,w) \in H_N(\divvr;\O)\times H_D^1(\O)$. Then $\btau \in H_N(\divvr;\O)$ and $v$ and $w$ in $H_D^1(\O)$, we have
\begin{eqnarray} \label{vww}
a(v,w) &=& (A\nabla v, \nabla w)+(Xv,w) =  (A\nabla v+\btau, \nabla w)+(\gradt\btau+Xv,w),\\ \label{wvv}
a(w,v) &=& (A\nabla v+\bb v, \nabla w)+(cv,w) \\\nonumber &=&  (A\nabla v+\bb v+\btau, \nabla w)+(\gradt\btau+cv,w).
\end{eqnarray}
It follows from \eqref{is}, \eqref{vww}, the Cauchy-Schwarz and Poincar\'e inequalities, the definition of the minus one norm, the assumption on $A$, and \eqref{-1les0}, for any $(\btau,v) \in H_N(\divvr;\O)\times H_D^1(\O)$, 
\begin{eqnarray*}
\beta \|\nabla v\|_0 &\leq& \sup_{w\in H_D^1(\O)} \dfrac{a(v,w)}{\|\nabla w\|_0} 
=\sup_{w\in H_D^1(\O)}\dfrac{ (A\nabla v+\btau, \nabla w)+(\gradt\btau+Xv,w)}{\|\nabla w\|_0} \\
&\leq & C\|A^{1/2}\nabla v+A^{-1/2}\btau\|_0+\|\gradt\btau + Xv\|_{-1} \\
&\leq & C (\|A^{1/2}\nabla v+A^{-1/2}\btau\|_0 + \|\gradt\btau +  Xv\|_0) .
\end{eqnarray*}
Similarly, using \eqref{wvv}, for any $(\btau,v) \in H_N(\divvr;\O)\times H_D^1(\O)$, we have
\begin{eqnarray*}
\beta \|\nabla v\|_0 &\leq& \sup_{w\in H_D^1(\O)} \dfrac{a(w,v)}{\|\nabla w\|_0} 
=\sup_{w\in H_D^1(\O)}\dfrac{ (A\nabla v+\bb v+\btau, \nabla w)+(\gradt\btau+cv,w)}{\|\nabla w\|_0} \\
%&\leq&(A\nabla v+\btau, \nabla v)+(\gradt\btau + Xv,v)+t((A^{-1}\btau+\nabla v),\bb v) \\
&\leq & C\|A\nabla v+\bb v+ \btau\|_0+\|\gradt\btau + cv\|_{-1} \\
&\leq & C (\|A\nabla v+\bb v+ \btau\|_0 + \|\gradt\btau +  cv\|_0) .
\end{eqnarray*}
Thus the conditions in Theorem \ref{thm_part} are satisfied, we have the coerciveness.
\end{proof}

\begin{rem}
Compared to the complicated proofs in \cite{CLMM:94,BLP:97,Cai:04,CFZ:15,Ku:07}, the proof in this section is dramatically simplified. We hide the compactness argument used in \cite{CLMM:94,BLP:97,Cai:04,CFZ:15} to the stability of the original bilinear form. For the second-order elliptic equation, the proof of the stability relies on the Fredholm alternative, which also relies on compactness; thus, the proofs are essentially equivalent. The advantage of the current proof is that it can be applied to other cases where the stability of the bilinear form is available, and the stability is not necessarily related to the compactness. For example, for an $L^2$ div least-squares method with Crouzeix-Raviart finite element approximation for the general second-order elliptic equation, we have the discrete stability of the bilinear form under the condition the mesh is fine enough \cite{CDNP:16}. We then can use the above proof method to derive the discrete coerciveness of the $L^2$ div least-squares method with Crouzeix-Raviart finite element approximation, see \cite{LZ:22}. On the other hand, the compactness arguments in  \cite{BLP:97,Cai:04,CFZ:15,Ku:07} do not work in this discrete scenario since the compactness argument is a proof of contradiction and can not be used in mesh-dependent methods to prove that the coercive constant is independent of the mesh size.

As seen from the proof, the proof also guides us in choosing the right weight of different terms.
\end{rem}

\section{Proof II: Proofs Based on an Equivalence Theorem and Inf-Sup Stability}
\setcounter{equation}{0}

The primary tool in the proof of this section is an equivalence theorem from the discontinuous Petrov-Galerkin method \cite{GQ:14}. Theorem \ref{eqv} can be found in Theorem 2.1 of \cite{GQ:14} and Theorem 3.1 of \cite{CFLQ:14}. The first two equalities in \eqref{eq_eqv} are trivial, and the proof of the last equality in \eqref{eq_eqv} is based on the simple fact that the last two suprema in the theorem are achieved for $v=Tw$. 

\begin{thm}{\bf An Equivalence Theorem.}\label{eqv}
Let $U$ be a normed linear space, and $V$ be a Hilbert space with associate bilinear form $(\cdot,\cdot)_V$ and norm $\|\phi\|_V$. The operator $T$ is a linear map from $U$ to $V$. Then, for any $\psi\in U$
\beq \label{eq_eqv}
\|T\psi\|_V =
\sup_{\chi\in U} \dfrac{(T \psi, T\chi)_V}{\|T \chi\|_V} =
\sup_{\phi\in T(U)} \dfrac{(T \psi, \phi)_V}{\|\phi\|_V} = 
\sup_{\phi\in V} \dfrac{(T \psi, \phi)_V}{\|\phi\|_V}.
\eeq
\end{thm}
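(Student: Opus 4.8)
The plan is to establish \eqref{eq_eqv} as a chain of three identifications, each a one-line consequence of either the Cauchy--Schwarz inequality or an explicit choice of the optimizing argument. First I would dispose of the degenerate case $T\psi=0$: then $(T\psi,T\chi)_V=(T\psi,\phi)_V=0$ for every admissible $\chi\in U$ and $\phi\in V$, so all three suprema and $\|T\psi\|_V$ vanish and there is nothing to prove. Assume henceforth $T\psi\neq 0$.

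Next I would pin down the rightmost quantity. Cauchy--Schwarz in the Hilbert space $V$ gives $(T\psi,\phi)_V\le\|T\psi\|_V\,\|\phi\|_V$ for all $\phi\in V$, hence $\sup_{\phi\in V}(T\psi,\phi)_V/\|\phi\|_V\le\|T\psi\|_V$; the choice $\phi=T\psi$, i.e.\ the Riesz representative of the bounded functional $(T\psi,\cdot)_V$, turns this into an equality, so $\sup_{\phi\in V}(T\psi,\phi)_V/\|\phi\|_V=\|T\psi\|_V$. For the middle quantity, since $T(U)\subseteq V$ the supremum over $\phi\in T(U)$ cannot exceed the one over $\phi\in V$; but $\phi=T\psi\in T(U)$ already realizes the value $\|T\psi\|_V$, so the middle supremum is squeezed to $\|T\psi\|_V$ as well. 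This is exactly the remark preceding the statement, that the optimizer of the last two suprema is $\phi=T\psi$, which lies in the range of $T$.

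Finally I would identify the leftmost supremum with the middle one by the substitution $\phi=T\chi$: as $\chi$ runs over $U$ (with $T\chi\neq 0$, which is forced by the denominator), the image $\phi=T\chi$ runs over $T(U)\setminus\{0\}$, and the quotient $(T\psi,T\chi)_V/\|T\chi\|_V$ is literally $(T\psi,\phi)_V/\|\phi\|_V$. Thus the two families of quotient values coincide and so do their suprema. Concatenating the three steps gives $\|T\psi\|_V=\sup_{\chi\in U}(\cdots)=\sup_{\phi\in T(U)}(\cdots)=\sup_{\phi\in V}(\cdots)$, which is \eqref{eq_eqv}.

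I do not anticipate a substantive obstacle: the argument is entirely elementary, being Cauchy--Schwarz plus the single explicit optimizer $\phi=T\psi$. The only points needing a moment's care are (i) isolating the case $T\psi=0$ at the outset (and, for full generality, the trivial subcase $T\equiv 0$, where one adopts the usual convention for a supremum over the empty set), and (ii) noting that the change of variables $\phi=T\chi$ need not be injective, which is harmless since only the set of values of the quotient enters the supremum.
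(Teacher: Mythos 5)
Your proposal is correct and follows essentially the same route as the paper's own (very brief) justification: the first two equalities are the change of variables $\phi=T\chi$, and the last follows because both suprema are attained at $\phi=T\psi$, which lies in $T(U)$, with Cauchy--Schwarz supplying the upper bound. Your added care about the degenerate cases $T\psi=0$ and the non-injectivity of $\chi\mapsto T\chi$ is a harmless refinement of the same argument.
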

In most first-order least-squares methods, we want to prove 
$$
C\|\psi\|_U \leq \|T\psi\|_V \quad \forall \psi\in U,
$$  
for the bilinear form $(T\psi, T\chi)_V$ with $C>0$ (or some simpler condition, see for example, Theorem \ref{thm_part}). Theorem \ref{eqv} enables us to choose the test function in a larger space $V$ instead of $\chi\in U$ or $\phi\in T(U)$. %Also, \eqref{eq_eqv} can be applied to individual terms in the least-squares bilinear forms, which gives more freedom in the analysis. 
In most $L^2$-based first-order least-squares methods, $V$ is a simple $L^2$-space on some domain or its boundary. %The proof usually follows a simple procedure: we pick a $v\in V$ such that $\|v\|_V \leq C\|w\|_U$ and $(Tw, v)_V \geq C \|w\|_U^2$, then $\|Tw\|_V \geq C \dfrac{\|w\|_U^2}{\|v\|_V}\geq C \|w\|_U$.

\begin{thm} \label{thmbb}
Assuming one of the conditions of Theorem \ref{assp} is true, the coercive results in \eqref{cov_L} hold.
\end{thm}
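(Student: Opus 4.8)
The plan is to derive the coerciveness from Theorem~\ref{thm_part} together with the equivalence theorem, so that it suffices to prove $C\|\nabla v\|_0^2 \le L(\btau,v;0)$ and $C\|\nabla v\|_0^2 \le L_{-1}(\btau,v;0)$ for all $(\btau,v)\in\X$. For the $L^2$ functional I would take $U=\X$, $V=L^2(\O)^d\times L^2(\O)$ with its standard inner product, and the linear operator $T(\btau,v)=(A^{-1/2}\btau+A^{1/2}\nabla v,\ \gradt\btau+Xv)$, so that $\|T(\btau,v)\|_V^2=L(\btau,v;0)$. Theorem~\ref{eqv} then gives $\|T(\btau,v)\|_V=\sup_{\phi\in V}(T(\btau,v),\phi)_V/\|\phi\|_V$; the point is that the test object $\phi$ may be chosen in the full space $V$, not merely in $T(\X)$.

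The key step is to restrict that supremum to a convenient family indexed by $w\in H^1_D(\O)$: take $\phi_w=(A^{1/2}\nabla w,\ w)\in V$. Using the integration-by-parts identity $(\btau,\nabla w)+(\gradt\btau,w)=0$ for $(\btau,w)\in H_N(\divvr;\O)\times H^1_D(\O)$ — equivalently the identity \eqref{vww} — one computes directly that $(T(\btau,v),\phi_w)_V=(A^{1/2}\nabla v+A^{-1/2}\btau,A^{1/2}\nabla w)+(\gradt\btau+Xv,w)=a(v,w)$, while $\|\phi_w\|_V^2=\|A^{1/2}\nabla w\|_0^2+\|w\|_0^2\le C\|\nabla w\|_0^2$ by \eqref{A} and the Poincar\'e inequality. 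Combining this with the inf-sup bound \eqref{is} yields $\|T(\btau,v)\|_V\ge \sup_{w\in H^1_D(\O)} a(v,w)/\|\phi_w\|_V\ge C^{-1}\beta\|\nabla v\|_0$, hence $C\|\nabla v\|_0^2\le L(\btau,v;0)$, and Theorem~\ref{thm_part} closes this case.

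For the $H^{-1}$ functional the same scheme works once the second slot of $V$ carries the $(H^1_D(\O))'$ inner product $(q_1,q_2)\mapsto(q_1,Sq_2)$ of \eqref{inner-1}: then $V=L^2(\O)^d\times (H^1_D(\O))'$, $T$ is the same map with $\gradt\btau+Xv$ regarded in the dual slot, and $\|T(\btau,v)\|_V^2=L_{-1}(\btau,v;0)$. The only adjustment is the test object: I would take $\phi_w=(A^{1/2}\nabla w,\ S^{-1}w)$, so that the dual pairing contributes $(\gradt\btau+Xv,\ S S^{-1}w)=(\gradt\btau+Xv,w)$ and again $(T(\btau,v),\phi_w)_V=a(v,w)$, while $\|\phi_w\|_V^2=\|A^{1/2}\nabla w\|_0^2+\|S^{-1}w\|_{-1}^2=\|A^{1/2}\nabla w\|_0^2+\|\nabla w\|_0^2\le C\|\nabla w\|_0^2$ by \eqref{wv}. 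The remainder is identical, and Theorem~\ref{thm_part} again upgrades $\|\nabla v\|_0$-control to the full norms. (The physical-form functionals $J,J_{-1}$ would follow the same way from \eqref{wvv} and the second bound in \eqref{is}, but that is a separate statement.)

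I expect the only genuine obstacle to be the bookkeeping of the dual inner product in the $H^{-1}$ case: correctly seeing that testing against $S^{-1}w$ is what reproduces $a(v,w)$ and that its $V$-norm is controlled by $\|\nabla w\|_0$ via \eqref{wv}. Everything else — the inner-product computation, the Poincar\'e and coefficient bounds, and the reduction through Theorem~\ref{thm_part} — is routine. Conceptually this proof runs on the same engine as Proof~I (integration by parts plus inf-sup stability), but packaged through Theorem~\ref{eqv}; its advantage elsewhere is that one never has to describe the range $T(U)$, only the ambient Hilbert space $V$, which for $L^2$-based methods is simply an $L^2$ space.
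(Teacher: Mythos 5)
Your proposal is correct and follows essentially the same route as the paper's own proof of Theorem~\ref{thmbb}: the same operator $T$, the same test pairs $(A^{1/2}\nabla w, w)$ and $(A^{1/2}\nabla w, S^{-1}w)$ in $V_0$ and $V_{-1}$ respectively, the same identification of the pairing with $a(v,w)$, and the same reduction through Theorem~\ref{eqv}, the inf-sup bound, and Theorem~\ref{thm_part}. No gaps.
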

\begin{proof}
Let $U_0=\X$ and  $V_0 = L^2(\O)^d \times L^2(\O)$. 
Let $U_{-1}=\{\btau \in L^2(\O)^d: \btau\cdot\bn = 0 \mbox{ on }\Gamma_N \}\times H^1_D(\O)$ and let $V_{-1} = L^2(\O)^d \times (H^1_D(\O))'$.

The operator $T$ for both from $U_0$ to $V_0$ and $U_{-1}$ to $V_{-1}$ is defined as:
\beq \label{T}
T(\btau,v) = \left(
\begin{array}{ccc}
A^{-1/2}\btau+ A^{1/2}\nabla v \\
\gradt \btau + Xv
\end{array}
\right).
\eeq
Then we have
\begin{eqnarray}
\|T(\btau,v) \|_{V_0}^2 &=& \|A^{-1/2}\btau+ A^{1/2}\nabla v \|_0^2 + \|\gradt \btau + Xv \|_0^2, \\
\|T(\btau,v) \|_{V_{-1}}^2 &=& \|A^{-1/2}\btau+ A^{1/2}\nabla v \|_0^2 + \|\gradt \btau + Xv \|_{-1}^2.
\end{eqnarray}
Let $w_0\in H_D^1(\O) \subset L^2(\O)$ and % \quad\mbox{and}\quad 
$\br_0 = A^{1/2}\nabla w_0 \in L^2(\O)^d$.
Then by integration by parts,
\begin{eqnarray*}
(T(\btau,v), (\br_0,w_0))_{V_0} &=& (A^{-1}\btau+ \nabla v,  A\nabla w_0)  +  
(\gradt \btau +  Xv,    w_0) = a (v,w_0).
\end{eqnarray*}
It is easy to $\|(\br_0,w_0)\|_{V_0} =\|A^{1/2}\nabla w_0\|_0 + \|w_0\|_0$  is equivalent to $\|\nabla w_0\|_0$ .
Then from Theorem \ref{eqv} and the inf-sup condition \eqref{bnb} of the \BNB theory,
$$
\|T(\btau,v) \|_{V_0} = \sup_{(\br,w) \in V_0}\dfrac{(T(\btau,v), (\br,w))_{V_0}}{\|(\br,w)\|_{V_0}}\geq  C_1  \sup_{w_0 \in  H_D^1(\O)} \dfrac{a(v,w_0)}{\|\nabla w_0\|_0} \geq C_2 \|\nabla v\|_0.
$$
The $L^2$-version coerciveness in \eqref{cov_L} is proved with the help of Theorem \ref{thm_part}.

For $z\in H_D^1(\O)$, let
$
\br_{-1} = A^{1/2}\nabla z$ and $w_{-1} = S^{-1}z$.
Since $z\in H_D^1(\O)$, thus $S^{-1}$ is well-defined.  By \eqref{wv},  $\|w_{-1}\|_{-1}=\|\nabla z\|_0$. We get
$$
\|(\br_{-1},w_{-1})\|_{V_{-1}} = \|A^{1/2}\nabla z\|_0 + \|w_{-1}\|_{-1} = \|A^{1/2}\nabla z\|_0 +\|\nabla z\|_0.
$$
Thus  $\|\nabla z\|_0$ is equivalent to $\|(\br_{-1},w_{-1})\|_{V_{-1}}$.

By the definition of the inner product in $(H^1_D(\O))'$ in \eqref{inner-1} and the definitions of $\br_{-1}$ and $w_{-1}$, we have
\begin{eqnarray*}
(T(\btau,v), (\br_{-1},w_{-1}))_{V_{-1}}& =& (A^{-1}\btau+ \nabla v,  A\nabla z) + (\gradt \btau +  Xv,    SS^{-1}z), \\
 &=&  (A^{-1}\btau+ \nabla v,  A\nabla z) + (\gradt \btau +  Xv, z)=a(v, z).
\end{eqnarray*}
Then with almost the same calculations as the $V_0$ case, we have
$\|T(\btau,v)\|_{V_{-1}} \geq C \|\nabla v\|_0$.
Thus  the conditions in Theorem \ref{thm_part} are satisfied, and the $H^{-1}$-version of coerciveness in \eqref{cov_L} is proved.
\end{proof}

By defining 
$$
T(\btau,v) = \left(
\begin{array}{ccc}
\btau+ A\nabla v +\bb v \\
\gradt \btau + c v
\end{array}
\right),
$$ 
and using very similar arguments, we can easily prove the following theorem.
\begin{thm}
Assuming one of the conditions of Theorem \ref{assp} is true, the coercive results in \eqref{cov_J} hold.
\end{thm}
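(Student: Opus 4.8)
The plan is to mirror the proof of Theorem~\ref{thmbb} verbatim, changing only the operator $T$ and the bilinear-form identity that links $T$ to $a(\cdot,\cdot)$. As before, I would set $U_0 = \X$, $V_0 = L^2(\O)^d \times L^2(\O)$, $U_{-1} = \{\btau \in L^2(\O)^d : \btau\cdot\bn = 0 \mbox{ on }\Gamma_N\}\times H^1_D(\O)$, and $V_{-1} = L^2(\O)^d \times (H^1_D(\O))'$, with
\[
T(\btau,v) = \left(\begin{array}{c} \btau + A\nabla v + \bb v \\ \gradt\btau + cv \end{array}\right),
\]
so that $\|T(\btau,v)\|_{V_0}^2 = J(\btau,v;0)$ and $\|T(\btau,v)\|_{V_{-1}}^2 = J_{-1}(\btau,v;0)$. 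By Theorem~\ref{thm_part} (inequalities \eqref{c_cov_J} and \eqref{c_cov_J-1}) it suffices to prove $C\|\nabla v\|_0^2 \leq J(\btau,v;0)$ and $C\|\nabla v\|_0^2 \leq J_{-1}(\btau,v;0)$.

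The key computation is to identify the right test element so that the $V$-inner product of $T(\btau,v)$ against it reproduces $a(w,v)$ — note the transposed arguments, since the physical-form system \eqref{fosys2} corresponds to the adjoint bilinear form, exactly as in \eqref{wvv}. For the $L^2$ case, given $w_0 \in H^1_D(\O)$, I would take $\br_0 = \nabla w_0 \in L^2(\O)^d$ and pair $(\br_0, w_0)$ against $T(\btau,v)$ in $V_0$; integration by parts on the $(\gradt\btau, w_0)$ term (using $w_0 \in H^1_D(\O)$ and $\btau\cdot\bn=0$ on $\Gamma_N$) gives
\[
(T(\btau,v),(\br_0,w_0))_{V_0} = (\btau + A\nabla v + \bb v, \nabla w_0) + (\gradt\btau + cv, w_0) = (A\nabla v + \bb v,\nabla w_0) + (cv, w_0) = a(w_0,v),
\]
matching \eqref{wvv}. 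Since $\|(\br_0,w_0)\|_{V_0} = \|\nabla w_0\|_0 + \|w_0\|_0$ is equivalent to $\|\nabla w_0\|_0$ by Poincar\'e, Theorem~\ref{eqv} together with the second inf-sup inequality in \eqref{is} yields
\[
\|T(\btau,v)\|_{V_0} = \sup_{(\br,w)\in V_0}\frac{(T(\btau,v),(\br,w))_{V_0}}{\|(\br,w)\|_{V_0}} \geq C_1 \sup_{w_0\in H^1_D(\O)}\frac{a(w_0,v)}{\|\nabla w_0\|_0} \geq C_2\|\nabla v\|_0.
\]
For the $H^{-1}$ case I would repeat the construction of Theorem~\ref{thmbb}: for $z\in H^1_D(\O)$ set $\br_{-1} = \nabla z$ and $w_{-1} = S^{-1}z$, use \eqref{wv} to get $\|w_{-1}\|_{-1} = \|\nabla z\|_0$ so that $\|(\br_{-1},w_{-1})\|_{V_{-1}}$ is equivalent to $\|\nabla z\|_0$, and use the inner-product formula \eqref{inner-1} to compute $(T(\btau,v),(\br_{-1},w_{-1}))_{V_{-1}} = (\btau + A\nabla v + \bb v, \nabla z) + (\gradt\btau + cv, SS^{-1}z) = a(z,v)$; then the same chain of inequalities gives $\|T(\btau,v)\|_{V_{-1}} \geq C\|\nabla v\|_0$, and Theorem~\ref{thm_part} closes the argument.

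I do not expect a genuine obstacle here — the proof is a direct transcription. The one point requiring a little care is bookkeeping with the adjoint bilinear form: the physical-flux system is tied to $a(w,v)$ rather than $a(v,w)$, so the relevant stability bound is the \emph{second} inequality of \eqref{is} (equivalently, $\beta\|\nabla v\|_0 \leq \sup_w a(w,v)/\|\nabla w\|_0$), which is available under any of the equivalent conditions of Theorem~\ref{assp} via \eqref{bnb}. A secondary check is that the coefficient terms $\bb v$ and $cv$ are controlled in $L^2$ by $\|v\|_0 \leq C\|\nabla v\|_0$ (Poincar\'e) and in the $-1$ norm via \eqref{-1les0}, so all continuity estimates needed to absorb these terms and to verify the conditions of Theorem~\ref{thm_part} follow from Assumption~\ref{ass_dc} exactly as in the previous two sections.
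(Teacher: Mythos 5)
Your proposal is correct and is exactly the argument the paper intends: the paper's own ``proof'' of this theorem consists of stating the modified operator $T$ and asserting that the arguments of Theorem~\ref{thmbb} carry over, and you have supplied precisely those arguments, including the two details that actually require care --- dropping the $A^{\pm 1/2}$ weights in the test pair $(\br_0,w_0)=(\nabla w_0,w_0)$ to match the unweighted first component of $T$, and pairing against $a(w,v)$ (the adjoint form, as in \eqref{wvv}) so that the second inf-sup inequality of \eqref{is} is the one invoked. No gaps; this is the same route as the paper, just written out.
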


\begin{rem}
The proof in this section is not necessarily simpler than the one presented in the previous section, but it gives another explanation and is probably worth keeping in mind.  The advantage of the proofs here is that we have more freedom to choose the test function to make a more refined analysis possible. In fact, in \cite{CFLQ:14}, test functions similar to those in \cite{AM:09} are chosen. %This option is not possible in Proof II.
\end{rem}

\section{Proof III:  Proofs Based on Stability Estimates of Decomposed Problems}
\setcounter{equation}{0}
We list two important cases of the a priori estimates \eqref{apriori1} and \eqref{apriori2}. The first one is that 
\beq
\langle g, v\rangle_{(H^1_D(\O))'\times H^1_D(\O)} := (f_1,v)
\eeq
for some $f_1\in L^2(\O)$. It is obvious that $f_1\in (H^1_D(\O))'$, then by \eqref{apriori1}, \eqref{apriori2}, and \eqref{-1les0}, we have 
\beq \label{aprioriL2}
	\|\nabla u\|_0 \leq C\|f_1\|_{-1}\leq C \|f_1\|_0.
\eeq
The second one is that 
\beq
\langle g, v\rangle_{(H^1_D(\O))'\times H^1_D(\O)} := (\bff_2,\nabla v)
\eeq
for some $\bff_2\in L^2(\O)^d$. Then by \eqref{apriori1}, \eqref{apriori2}, and \eqref{def-1norm}, we have 
\beq \label{apriori_vec}
	\|\nabla u\|_0 \leq C\|g\|_{-1}  = C \sup_{v\in H^1_D(\O)} \dfrac{(\bff_2,\nabla v)}{\|\nabla v\|_0} \leq C\|\bff_2\|_0.
\eeq
Note that, for the second case and $\G_D=\p\O$, we have $(H^1_D(\O))' = (H^1_0(\O))' = H^{-1}(\O)$, $g$ can be (at least formally) written as $-\gradt\bff_2$, see the discussion in \cite{Evans:10,Brezis:11}. For the general case $\G_D\neq \p\O$, we can not write $g$ as $-\gradt\bff_2$, thus the weak forms \eqref{vp11} and \eqref{vp22} instead of the strong PDE forms are preferred in our presentations. 

Before diving into the proof, we first present stability estimates for four auxiliary problems.

\noindent{\bf Problems 1 \& 2.}
Let $\bff\in L^2(\O)^d$ and $g\in L^2(\O)$. Consider the following first-order systems:
\beq \label{problem12}
\mbox{Problem 1}
\left\{
\begin{array}{rlllll}
\br + A\nabla w &=& \bff &\mbox{ in } \O,\\[1mm]
\gradt\br +Xw&=& 0  &\mbox{ in }  \O,\\[1mm]
w&=&0& \mbox{ on } \Gamma_D,\\[1mm]
\br\cdot\bn &= &0& \mbox{ on } \Gamma_N.
\end{array}
\right.
\quad\mbox{Problem 2}
\left\{
\begin{array}{rlllll}
\bs + A\nabla z &=& \bzero , &\mbox{ in } \O,\\[1mm]
\gradt\bs +Xz  &= &g  &\mbox{ in } \O,\\[1mm]
z&=&0& \mbox{ on } \Gamma_D,\\[1mm]
\bs\cdot\bn &= &0& \mbox{ on } \Gamma_N.
\end{array}
\right.
\eeq
%where the term $\gradt\br$ should be understood in a distributional sense,
%$$
%\br \in L^2(\O), \br\cdot\bn =0  \quad \mbox{ with } (\gradt\br , v) = - (\br, \nabla v), \quad \forall  v\in H^1_D(\O).
%$$
%We want to prove the following a priori error estimate for \eqref{problem1}:
\begin{lem}
Assuming one of the conditions of Theorem \ref{assp} is true, the following stability estimates for \eqref{problem12} are true:
\beq \label{apriori1111}
\|\nabla w\|_0 \leq C \|\bff\|_0 
\quad\mbox{and}\quad
\|\nabla z\|_0\leq C \|g\|_{-1} \leq C \|g\|_0.
\eeq
\end{lem}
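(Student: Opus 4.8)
The plan is to eliminate the flux unknown from each of the two first-order systems in \eqref{problem12} and to recognize the resulting scalar equation as an instance of the weak problem \eqref{vp11}; the a priori estimate is then available because one of the equivalent conditions of Theorem \ref{assp} is assumed.

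For Problem~2 I would argue as follows. From the first equation $\bs=-A\nabla z$, so the second equation becomes $-\gradt(A\nabla z)+Xz=g$ in $\O$. Note that $\bs\in H_N(\divvr;\O)$: indeed $\gradt\bs=g-Xz\in L^2(\O)$, so its normal trace is well defined and the condition $\bs\cdot\bn=0$ on $\G_N$ is meaningful. Testing with $v\in H^1_D(\O)$ and integrating by parts, the boundary term $\langle\bs\cdot\bn,v\rangle_{\p\O}$ vanishes since $v=0$ on $\G_D$ and $\bs\cdot\bn=0$ on $\G_N$, and we obtain $a(z,v)=(g,v)=\langle g,v\rangle$ for all $v\in H^1_D(\O)$. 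Thus $z$ solves \eqref{vp11}, and \eqref{apriori1} gives $\|\nabla z\|_0\le C\|g\|_{-1}$, while \eqref{-1les0} (applicable since $g\in L^2(\O)$) gives $\|g\|_{-1}\le C\|g\|_0$.

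For Problem~1 I would perform the same elimination, but now the first equation is inhomogeneous: $\br=\bff-A\nabla w$, with $\gradt\br=-Xw\in L^2(\O)$, so again $\br\in H_N(\divvr;\O)$. Testing the second equation with $v\in H^1_D(\O)$ and integrating by parts, the boundary contribution drops out as before, leaving $(\br,\nabla v)=(Xw,v)$, i.e.\ $(\bff-A\nabla w,\nabla v)=(Xw,v)$. Rearranging, $a(w,v)=(\bff,\nabla v)$ for all $v\in H^1_D(\O)$, so $w$ solves \eqref{vp11} with the right-hand-side functional $\langle g,v\rangle:=(\bff,\nabla v)$, which is exactly the ``vector'' case treated in \eqref{apriori_vec}. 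That estimate yields $\|\nabla w\|_0\le C\|\bff\|_0$ directly.

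There is no real obstacle in this argument; the only step requiring a little care is the integration by parts with mixed boundary conditions, i.e.\ verifying that $\br$ and $\bs$ belong to $H_N(\divvr;\O)$ so that their normal traces make sense and the boundary terms vanish against test functions in $H^1_D(\O)$. Once the two systems are rewritten as instances of \eqref{vp11}, the estimates \eqref{apriori1111} follow from \eqref{apriori1}, \eqref{apriori_vec}, and \eqref{-1les0}.
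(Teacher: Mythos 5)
Your proof is correct, and the computations match the paper's, but you run the argument in the opposite direction. You take a solution $(\br,w)$ (resp.\ $(\bs,z)$) of \eqref{problem12} as given, eliminate the flux, and show that the scalar component solves \eqref{vp11} with right-hand side $(\bff,\nabla v)$ (resp.\ $(g,v)$), after which \eqref{apriori_vec} and \eqref{aprioriL2} give the bounds. The paper instead argues constructively: it first defines $w$ (resp.\ $z$) as the unique solution of that scalar weak problem, then sets $\br=\bff-A\nabla w$ (resp.\ $\bs=-A\nabla z$) and verifies, by testing with $v\in C_0^\infty(\O)$ and then with general $v\in H^1_D(\O)$, that the pair satisfies all equations and the normal boundary condition of Problem 1 (resp.\ 2). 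The two directions prove the same identity, but the paper's version additionally establishes \emph{existence} of solutions to Problems 1 and 2, which your version silently presupposes. This matters downstream: the proof of Theorem \ref{p12} needs to know that, for the particular data $\bff=\btau+A\nabla v$ and $g=\gradt\btau+Xv$, solutions $(\br,w)$ and $(\bs,z)$ actually exist so that $(\btau,v)$ can be written as their sum. Your derivation is a perfectly valid proof of the stability estimates as literally stated in the lemma (and your care with the normal trace of $\br,\bs$ in $H(\divvr;\O)$ and the vanishing of the boundary pairing on $\Gamma_N$ is exactly the point that needs checking in either direction), but to support the use made of the lemma you would still need to supply the existence argument, which is precisely what the paper's constructive route delivers for free.
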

\begin{proof}
First, we construct a unique solution for Problem 1. Let $w\in H^1_D(\O)$ be the unique solution of 
\beq \label{eqw}
(A\nabla w, \nabla v) +  (X w, v) =  (\bff, \nabla v),
\quad \forall v\in H^1_D(\O).
\eeq
Let $\br =\bff-A\nabla w$, which is the first equation of Problem 1, and substitute it into \eqref{eqw}. We have
\beq
-(\br,\nabla v) +(Xw,v)  =0 \quad  \forall v\in H^1_D(\O).
\eeq
Following the arguments in \cite{BM:20}, picking $v\in C_0^{\infty}(\O)$ and integrating by parts on the term $(\br, \nabla v)$, we get the second equation of Problem 1. For any $v\in H^1_D(\O)$, 
$$
\langle \br\cdot\bn, v\rangle_{H^{-1/2}(\Gamma_N) \times H^{1/2}(\Gamma_N) } =
(\br, \nabla v)+(\gradt \br, v) = (\gradt \br+Xw,v) =0,
$$
thus the boundary condition of $\br$ of Problem 1 is satisfied. Thus, we have completed our construction of the solutions of Problem 1.
For the equation \eqref{eqw}, the a priori estimate \eqref{apriori_vec} gives the bound $\|\nabla w\|_0  \leq C \|\bff\|_0$.

We then work on Problem 2. Similar to Problem 1, we construct solutions of Problem 2 first.
Let $z\in H^1_D(\O)$ be the unique solution of the following problem:
\beq \label{eqz}
(A\nabla z,\nabla v) + (X z,v) = (g,v)\quad\forall v\in H_D^1(\O).
\eeq
Let $\bs = -A\nabla z$ and substitute it into \eqref{eqz}, and use the same arguments as in Problem 1, we get the second equation and the boundary condition of $\bs$ of Problem 2.  Thus, we have constructed $z$ and $\bs$ to be the solution of Problem 2.
For the equation \eqref{eqz}, based on the assumption \eqref{aprioriL2}, we have $\|\nabla z\|_0\leq C \|g\|_{-1} \leq C \|g\|_0$.
\end{proof}
Then we can prove the coerciveness of $L$ and $L_{-1}$.
\begin{thm}\label{p12}
Assuming one of the conditions of Theorem \ref{assp} is true, the coerciveness results in \eqref{cov_L} hold.
\end{thm}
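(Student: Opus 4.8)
The plan is to reduce the assertion, via Theorem~\ref{thm_part}, to the single bound $C\|\nabla v\|_0^2\le L(\btau,v;0)$ (together with its $L_{-1}$ counterpart), and to derive that bound by decomposing $(\btau,v)\in\X$ into the two pieces governed by Problems~1 and~2. First I would set
\[
\bff:=\btau+A\nabla v\in L^2(\O)^d,\qquad g:=\gradt\btau+Xv\in L^2(\O),
\]
the inclusion $g\in L^2(\O)$ being automatic since $\btau\in H(\divvr;\O)$, $v\in H^1_D(\O)$, and the coefficients are bounded (Assumption~\ref{ass_dc}). Let $(\br,w)$ solve Problem~1 with right-hand side $\bff$ and $(\bs,z)$ solve Problem~2 with right-hand side $g$; these exist by the preceding lemma, which also supplies \eqref{apriori1111}, i.e. $\|\nabla w\|_0\le C\|\bff\|_0$ and $\|\nabla z\|_0\le C\|g\|_{-1}\le C\|g\|_0$.

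The heart of the proof is to check that this splitting reproduces the original pair, $(\btau,v)=(\br+\bs,\,w+z)$. Adding the first-order systems in \eqref{problem12} gives $(\br+\bs)+A\nabla(w+z)=\bff$ and $\gradt(\br+\bs)+X(w+z)=g$, so $\boldsymbol{\mu}:=\btau-\br-\bs\in H_N(\divvr;\O)$ and $\phi:=v-w-z\in H^1_D(\O)$ satisfy $\boldsymbol{\mu}+A\nabla\phi=0$ and $\gradt\boldsymbol{\mu}+X\phi=0$. Substituting $\boldsymbol{\mu}=-A\nabla\phi$ into the second relation, testing against an arbitrary $\psi\in H^1_D(\O)$, and integrating by parts — the boundary contribution drops because $\boldsymbol{\mu}\cdot\bn=0$ on $\Gamma_N$ and $\psi=0$ on $\Gamma_D$ — yields $a(\phi,\psi)=0$ for all $\psi\in H^1_D(\O)$. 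Hypothesis~(1) of Theorem~\ref{assp} then forces $\phi=0$, and consequently $\boldsymbol{\mu}=-A\nabla\phi=0$; hence $v=w+z$ and $\btau=\br+\bs$.

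With the decomposition in hand, the triangle inequality gives $\|\nabla v\|_0\le\|\nabla w\|_0+\|\nabla z\|_0\le C\|\bff\|_0+C\|g\|_{-1}$. Since $\bff=A^{1/2}\bigl(A^{-1/2}\btau+A^{1/2}\nabla v\bigr)$, the upper bound $\Lambda_1$ on $A$ from Assumption~\ref{ass_dc} gives $\|\bff\|_0\le\Lambda_1^{1/2}\|A^{-1/2}\btau+A^{1/2}\nabla v\|_0$; bounding $\|g\|_0$ by the divergence residual of $L$ (respectively $\|g\|_{-1}$ by that of $L_{-1}$, using the same decomposition and merely keeping the weaker norm in the estimate for $\|\nabla z\|_0$) then produces $C\|\nabla v\|_0^2\le L(\btau,v;0)$ and $C\|\nabla v\|_0^2\le L_{-1}(\btau,v;0)$. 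Inequalities \eqref{c_cov_L} and \eqref{c_cov_L-1} of Theorem~\ref{thm_part} upgrade these to the full coerciveness statements in \eqref{cov_L}.

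The main obstacle I anticipate is the identity $(\btau,v)=(\br+\bs,\,w+z)$: one has to make sure the constructed fluxes $\br$ and $\bs$ carry exactly the $\Gamma_N$ normal-trace condition used in the integration by parts, and that the homogeneous system satisfied by $(\boldsymbol{\mu},\phi)$ collapses to precisely the weak equation $a(\phi,\cdot)=0$ covered by Theorem~\ref{assp}(1), with no residual forcing or shifted boundary data. The remaining steps are routine applications of the triangle inequality, the coefficient bounds of Assumption~\ref{ass_dc}, and the already-established stability estimates \eqref{apriori1111}.
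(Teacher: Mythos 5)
Your proposal is correct and follows essentially the same route as the paper's proof: set $\bff=\btau+A\nabla v$ and $g=\gradt\btau+Xv$, decompose via Problems~1 and~2, apply the stability bounds \eqref{apriori1111}, and close with Theorem~\ref{thm_part}. The only difference is that you explicitly justify the identity $(\btau,v)=(\br+\bs,w+z)$ through the uniqueness of the homogeneous problem, a step the paper compresses into the phrase ``by the linearity of Problems 1 and 2''; your version fills in that detail correctly.
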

\begin{proof}
For $(\btau,v)\in \X$, in \eqref{problem12}, let $\bff = \btau + A\nabla v $ for Problems 1 and $g= \gradt\btau + Xv$ for Problem 2.
By the linearity of Problems 1 and 2, we have  $\btau = \br+\bs$ and $v= w+z$. By the a priori error estimates \eqref{apriori1111}, we have
$$
\|\nabla v\|_0\leq C(\|\btau + A\nabla v\|_0 + \| \gradt\btau + Xv\|_{-1})\leq 
C(\|\btau + A\nabla v\|_0 + \| \gradt\btau + Xv\|_0).
$$
Thus conditions of Theorem \ref{thm_part} are satisfied, the coerciveness results in \eqref{cov_L} are proved.
\end{proof}

\noindent{\bf Problems 3 \& 4.}
Let $\bff\in L^2(\O)^d$ and $g\in L^2(\O)$. Consider the following first-order systems:
\beq \label{problem34}
\mbox{Problem 3}
\left\{
\begin{array}{rllllll}
\br + A\nabla w +\bb w &=& \bff &\mbox{ in }  \O,\\[1mm]
\gradt\br + c w&=& 0  &\mbox{ in }   \O,\\[1mm]
w&=&0& \mbox{ on } \Gamma_D,\\[1mm]
\br\cdot\bn &= &0& \mbox{ on } \Gamma_N.
\end{array}
\right.
\mbox{Problem 4}
\left\{
\begin{array}{rllllll}
\bs + A\nabla z +\bb z &=&0&  \in \O,\\[1mm]
\gradt\bs + c z &=&g &\mbox{ in }  \O,\\[1mm]
z&=&0& \mbox{ on } \Gamma_D,\\[1mm]
\bs\cdot\bn &= &0& \mbox{ on } \Gamma_N.
\end{array}
\right.
\eeq
By the almost identical arguments, we can show the following estimates for Problems 3 and 4:
\beq
\|\nabla w\|_0 \leq C \|\bff\|_0 \quad\mbox{and}\quad
\|\nabla z\|_0\leq C \|g\|_{-1} \leq C \|g\|_0.
\eeq
By the very similar arguments of Theorem \ref{p12}, we can show the following theorem.
\begin{thm}
Assuming one of the conditions of Theorem \ref{assp} is true, the coerciveness results in \eqref{cov_J} hold.
\end{thm}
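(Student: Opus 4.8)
The plan is to mirror the structure of Theorem~\ref{p12} exactly, replacing Problems 1 \& 2 by Problems 3 \& 4. The key observation is that the systems in \eqref{problem34} are the physical-form analogues of \eqref{problem12}: the diffusion flux $A\nabla w$ is replaced by the total flux $A\nabla w + \bb w$, and the first-order term $Xw = \bb\cdot\nabla w + cw$ in the divergence equation is replaced by the bare reaction term $cw$. This is precisely the difference between the weak problem \eqref{vp11} and the adjoint weak problem \eqref{vp22}: testing the first equation of Problem~3 against $\nabla v$ and integrating by parts on $\gradt\br$ produces $a(v,w) = (\bff,\nabla v)$, i.e. the adjoint form. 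Since one of the conditions of Theorem~\ref{assp} is assumed, \emph{all} of them hold, so in particular the adjoint stability bound \eqref{apriori2} is available, and the construction-and-estimate argument of the Lemma preceding Theorem~\ref{p12} goes through verbatim. I would therefore state and prove the corresponding lemma: for $\bff\in L^2(\O)^d$ and $g\in L^2(\O)$, Problems 3 \& 4 have unique solutions $(\br,w),(\bs,z)\in\X$ with $\|\nabla w\|_0\le C\|\bff\|_0$ and $\|\nabla z\|_0\le C\|g\|_{-1}\le C\|g\|_0$ (the $g$-estimate uses \eqref{aprioriL2}).

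Concretely, for Problem~3 I would first let $w\in H^1_D(\O)$ solve $a(v,w) = (\bff,\nabla v)$ for all $v\in H^1_D(\O)$ — well-posed by the equivalence in Theorem~\ref{assp}, specifically condition~(6) — then set $\br = \bff - A\nabla w - \bb w$ so the first equation holds by definition. Substituting back gives $-(\br,\nabla v) + (cw,v) = 0$ for all $v\in H^1_D(\O)$; picking $v\in C_0^\infty(\O)$ and integrating by parts on $(\br,\nabla v)$ yields $\gradt\br + cw = 0$, the second equation, whence $\br\in H(\divvr;\O)$. The boundary identity $\langle\br\cdot\bn,v\rangle_{H^{-1/2}(\Gamma_N)\times H^{1/2}(\Gamma_N)} = (\br,\nabla v) + (\gradt\br,v) = (\gradt\br + cw, v) = 0$ for all $v\in H^1_D(\O)$ then gives $\br\cdot\bn = 0$ on $\Gamma_N$, so $(\br,w)\in\X$. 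The stability bound $\|\nabla w\|_0\le C\|\bff\|_0$ follows from \eqref{apriori_vec} applied to the right-hand side $v\mapsto(\bff,\nabla v)$. Problem~4 is handled identically with $\bff = \bzero$, $g$ on the second equation, using \eqref{aprioriL2} for the $\|g\|_{-1}\le C\|g\|_0$ step.

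For the coerciveness theorem itself, given $(\btau,v)\in\X$, I would put $\bff = \btau + A\nabla v + \bb v$ into Problem~3 and $g = \gradt\btau + cv$ into Problem~4. By linearity and uniqueness, $\btau = \br + \bs$ and $v = w + z$, so
\beq
\|\nabla v\|_0 \le \|\nabla w\|_0 + \|\nabla z\|_0 \le C\bigl(\|\btau + A\nabla v + \bb v\|_0 + \|\gradt\btau + cv\|_{-1}\bigr) \le C\bigl(\|\btau + A\nabla v + \bb v\|_0 + \|\gradt\btau + cv\|_0\bigr).
\eeq
These are exactly the hypotheses $C\|\nabla v\|_0^2 \le J_{-1}(\btau,v;0)$ and $C\|\nabla v\|_0^2 \le J(\btau,v;0)$ of Theorem~\ref{thm_part}, so \eqref{c_cov_J} and \eqref{c_cov_J-1} upgrade these to the full coerciveness results in \eqref{cov_J}.

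I do not expect a genuine obstacle here; the only point requiring a little care is confirming that it is the \emph{adjoint} weak form \eqref{vp22} (equivalently condition~(6) of Theorem~\ref{assp}) rather than \eqref{vp11} that governs Problems 3 \& 4 — one must be careful about which slot of $a(\cdot,\cdot)$ carries the solution after integration by parts. Because Theorem~\ref{assp} asserts the equivalence of all six conditions, this bookkeeping subtlety has no consequence for the argument, and the a priori estimate \eqref{apriori2}/\eqref{apriori_vec} is available regardless. The secondary item to double-check is that the flux recovered in the construction genuinely lies in $H_N(\divvr;\O)$ so that $(\br,w)\in\X$; this is the boundary-trace computation above and is routine.
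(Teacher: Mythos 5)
Your proposal is correct and follows exactly the route the paper intends for this theorem: it instantiates the construction-and-stability lemma for Problems 3 \& 4 (which the paper only sketches as ``almost identical arguments''), correctly identifying that the total-flux system is governed by the adjoint form $a(v,w)$ via condition (6)/\eqref{apriori2}, and then repeats the decomposition argument of Theorem \ref{p12} to verify the hypotheses of Theorem \ref{thm_part}. The details you supply — the recovery of $\gradt\br + cw = 0$, the trace computation showing $\br\cdot\bn = 0$ on $\Gamma_N$, and the uniqueness needed to write $\btau = \br+\bs$, $v = w+z$ — are all sound and match the paper's template.
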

\begin{rem}
The proof in this section is essentially an application of the stability results of PDE to its decoupled first-order systems. Again, it looks more complicated than the proof I. Since the proof method is used in two different situations \cite{CQ:17,BM:20}, we give the proof for the general elliptic equations here for completeness. 
\end{rem}

\section{Extension to least-squares finite element methods with $H^{-1}$ right-hand side}
\setcounter{equation}{0}
This section discusses the least-squares finite element methods for the second-order elliptic equation with an $H^{-1}$ right-hand side. Such a problem appears in many situations, for example, in the goal-oriented, a posteriori error estimate \cite{IP:21} and Darcy flow.

For simplicity, we only consider the \eqref{pde1}-type of equation with a pure Dirichlet boundary condition and the $L^2$-version of the least-squares finite element method.  As pointed out in \cite{Evans:10,Brezis:11}, any functional in $H^{-1}:= (H_0^1(\O))'$ can be written as 
$f - \gradt \bg$ for $f\in L^2(\O)$ and $\bg\in L^2(\O)^d$. Thus, we consider the following problem:
\beq \label{pde1_m1}
\begin{array}{rcl}
-\gradt(A \nabla u) +\bb\cdot \nabla  u + c u &=&f_1 - \gradt (A \bff_2)  \mbox{ in } \O, \quad
u = 0 \mbox{ on } \p\O.
\end{array}
\eeq
Here  $f_1\in L^2(\O)$ and $\bff_2 \in L^2(\O)^d$ are given functions.  We have $f_1 +\gradt (A\bff_2) \in H^{-1}(\O)$. The divergence of $ A\bff_2$ should be understood in the distributional sense, i.e., 
$$
(\gradt (A\bff_2), v) = -(A\bff_2, \nabla v)\quad \forall v\in H^1_0(\O).
$$
A typical $A\bff_2$ can be $A\nabla v_h$ for $v_h$ being a function in the conforming finite element space. Such a righthand side also appears in the recovery-based error estimators, see \cite{CZ:10b}. 
The standard variational problem reads: Find $u\in H^1_0(\O)$, such that,
\beq
(A\nabla u,\nabla v)+(\bb\cdot\nabla u+c u, v) = (f_1,v)+(A\bff_2, \nabla v) \quad \forall v\in H^1_0(\O).
\eeq
For equation \eqref{pde1_m1}, the quantity $A\nabla u$ is not in $H(\divvr;\O)$. Since $\gradt(A\bff_2)$ is only in $H^{-1}(\O)$, not $L^2(\O)$, thus 
$\gradt(A\nabla u) = \bb\cdot \nabla  u + c u- f_1 - \gradt (A \bff_2)$ is only in $H^{-1}(\O)$.  Taking the simplest example, let $\bb =0$, $c=0$, $f_1=0$, $A=I$, and $\bff_2 = \nabla v_h$, where $v_h$ is a continuous piecewise linear finite element function. It is well-known that we usually do not have $\jump{\nabla v_h\cdot\bn_F}_F =0$ across an internal edge $F$ of a finite element mesh. This term often appears in the residual type of a posteriori error estimator \cite{AO:00,Ver:13}, and recovering it in the $H(\divvr)$-conforming space is the foundation of a recovery-based error estimator \cite{CZ:09}. Then for this simplest example, $A\nabla u=-A\nabla v_h \not\in H(\divvr;\O)$. 

On the other hand, let the flux $\bsigma = -A \nabla u+A\bff_2\in L^2(\O)^d$, then $\gradt\bsigma  =f_1 - \bb\cdot\nabla u-cu \in L^2(\O)$, thus we have $\bsigma \in H(\divvr;\O)$.
We use the fact $\nabla u = \bff_2 -A^{-1}\bsigma$ and get a first-order system:
\beq \label{fosys1_m1}
\left\{
\begin{array}{rclll}
\bsigma + A \nabla u &=&A\bff_2& \mbox{ in } \O,
\\[1mm]
\gradt\bsigma +Xu &=&f_1&  \mbox{ in } \O,\\[1mm]
u &=& 0 &\mbox{ on } \p\O.
\end{array}
\right.
\eeq
For $(\btau,v)\in H(\divvr;\O)\times H^1_0(\O)$, define the least-squares functional for the system \eqref{fosys1_m1},
\begin{eqnarray}
M(\btau,v;f_1,\bff_2) &:=& \|A^{-1/2}\btau + A^{1/2}\nabla v-A^{1/2}\bff_2\|_0^2 +\|\gradt\btau +Xv-f_1  |_0^2.
\end{eqnarray}
The corresponding least-squares minimization problem is:
$$
\mbox{Find } (\bsigma,u)\in H(\divvr;\O)\times H^1_0(\O) \mbox{ s.t. }
M(\bsigma,u;f_1,\bff_2) = \inf_{(\btau,v)\in H(\divvr;\O)\times H^1_0(\O)}M(\btau,v;f_1,\bff_2). 
$$
The Euler-Lagrange weak problem is: Find $(\bsigma,u)\in H(\divvr;\O)\times H^1_0(\O)$, such that
\beq\label{vp}
b((\bsigma,u), (\btau,v)) = F(\btau,v) \quad \forall(\btau,v)\in H(\divvr;\O)\times H^1_0(\O),
\eeq
where the bilinear form $a$ and linear form $F$ are defined for all  $(\brho,w)$ and $(\btau,v)\in H(\divvr;\O)\times H^1_0(\O)$ as:
\begin{eqnarray*}
b((\brho,w), (\btau,v)) &=& (\brho+A\nabla w, A^{-1}\btau+\nabla v)+ (\gradt\brho +Xw,\gradt\btau +Xv) ,\\
F(\btau,v) &=& (A\bff_2,  A^{-1}\btau+\nabla v)+ (f_1 ,\gradt\btau +Xv).
\end{eqnarray*}

Let $\cT = \{K\}$ be a triangulation of $\O$ using simplicial elements. The mesh $\cT$ is assumed to be regular. For an element $K\in \cT$ and an integer $k\geq 0$, let $P_k(K)$ be the space of polynomials with degrees less than or equal to $k$. Define the finite element spaces $RT_k$, $S_k$, and $S_{k,0}$ as follows:
$$
RT_k  :=\{\btau \in H(\divvr; \O) \colon
			\btau|_K \in P_k(K)^d + x P_k(K),\,\, \forall \, K\in \cT\},
$$
$$			
S_k :=	\{v \in C^0(\O) \colon
			v|_K \in P_k(K),\,\, \forall \, K\in \cT\}\quad\mbox{and}\quad S_{k,0} := S_k \cap H_0^1(\O).			
$$
Then for  $k\geq 0$ being an integer, the corresponding first-order system least-squares minimization problem and finite element problem are: Find $(\bsigma_h,u_h)\in  RT_k\times S_{k+1,0}$, such that
$$
%\mbox{Find } (\bsigma_h,u_h)\in RT_k\times S_{k+1} \mbox{ s.t. }
M(\bsigma_h,u_h;f_1,\bff_2) = \inf_{(\btau,v)\in RT_k\times S_{k+1,0}}M(\btau,v;f_1,\bff_2), 
$$
and find $(\bsigma_h,u_h)\in  RT_k\times S_{k+1,0}$, such that
\beq\label{lsfem}
b((\bsigma_h,u_h), (\btau,v)) = F(\btau,v) \quad \forall(\btau,v)\in  RT_k\times S_{k+1,0},
\eeq
respectively.

Since it is obvious that
$
M(\btau,v;0,0) = L(\btau,v;0),
$
we immediately have 
\beq
b((\btau,v), (\btau,v)) \geq C\tri (\btau,v)\tri^2. 
\eeq
Thus the existence and uniqueness of both \eqref{vp} and \eqref{lsfem} are established. 

\subsection{A priori error estimate}
It is easy to derive that the quasi-best approximation property holds:
\beq\label{Cea}
\tri (\bsigma-\bsigma_h,u-u_h)\tri \leq C\inf_{(\btau_h,v_h)\in RT_k \times S_{k+1,0}} \tri (\bsigma-\btau_h,u-v_h)\tri.
\eeq
Thus, we only need the approximation properties of the discrete spaces and the regularities of the solutions to finish the a priori error estimate.

First, we discuss the local approximation properties of $S_{k+1,0}$.  The space $H^{1+s}(\O)$, with $s>0$ for two dimensions and $s > 1/2$ for three dimensions, is embedded in $C^0(\O)$ by Sobolev's embedding theorem. Hence, we can define the nodal interpolation of the function $v\in H^{1+s}(\O)$. It is proved in \cite{DuSc:80} that if $v \in H^{1+s_K}(K)$ with $s_K > 0$ in two dimensions and  $s_K > 1/2$ in three dimensions, then for $s_K\leq k+1$, the following estimate holds for the nodal interpolation $I_{no}$:
\beq\label{nodal}
\|\nabla(v-I_{no}v)\|_{0,K} \leq C h^{s_K}|v|_{1+s_K,K} \quad \forall K\in \cT.
\eeq
For solutions with low regularities, the nodal interpolation is not well-defined. We can use the modified Cl\'{e}ment interpolation \cite{Clement:75,BeGi:98} or the Scott-Zhang interpolation \cite{SZ:90}. For an element $K\in\cT$, let $\Delta_K$ be the collection of elements in $\cT$ that share at least one vertex with $K$.
Assume that $v\in H^1_0(\O)$ and $v|_{\Delta_K}\in H^{1+s_{\Delta_K}}(\Delta_K)$ for some $0 <  s_{\Delta_K}\leq k+1$, and let $I_{sz} v$ be the Scott-Zhang interpolation into $S_{k+1,0}$, we have 
\beq\label{sz}
\|\nabla(v-I_{sz} v)\|_{0,K} \leq C h^{s_{\Delta_K}}|v|_{1+s_{\Delta_K},\Delta_K}.
\eeq
Define $\cT_s$ to be the part of the mesh such that the local element-wise regularity $s_K$ of $H^{1+s_K}(K)$ is  big enough to ensure the nodal interpolation:
$$
\cT_s:=\{K\in \cT: s_K>0 \mbox{ for } d=2 \mbox{ and } s_K>1/2  \mbox{ for }  d=3 \}.
$$
Combining the approximation properties of \eqref{nodal} and \eqref{sz}, we have an almost localized approximation result: assume that $u\in H^{1}_0(\O)$,  $u|_K \in H^{1+s_K}(K)$ for $K\in \cT_s$,  and  $u|_{\Delta_K} \in H^{1+s_{\Delta_K}}({\Delta_K})$ for $K\in \cT\backslash\cT_s$, where $\max_{K\in\cT_s}\{s_K\}\leq k+1$ and $\max_{K\in\cT\backslash\cT_s}\{s_{\Delta_K}\}\leq k+1$,
\beq\label{uapp}
\inf_{v\in S_{k+1,0} } \|\nabla(u-v)\|_{0} \leq C (\sum_{K\in\cT_s}h_K^{s_K} |u|_{1+s_K,K}+\sum_{K\in\cT\backslash \cT_s}h_K^{s_{\Delta_K}} |u|_{1+s_{\Delta_K},\Delta_K}).
\eeq
Assume that $\bsigma = A(\bff_2-\nabla u)$ and $\gradt\bsigma = f_1 -\bb\cdot\nabla u-cu$ have the following local regularities, respectively:
$$
\bsigma|_K \in H^{\ell_K}(K) \quad\mbox{and}\quad \gradt\bsigma|_K \in H^{t_K}(K) \quad K\in \cT.
$$

For a fixed $r>0$, denote by $I_{rt}: \Hdiv \cap [H^r(\O)]^d \mapsto RT_k$ the standard $RT$ interpolation operator.  We have the following local approximation property: for $\btau \in H^{\ell_K}(K)$, $0<\ell_K \leq k+1$,
\begin{eqnarray} \label{rti}
\|\btau - I_{rt} \btau\|_{0,K}
  &\leq& C h_K^{\ell_K} |\btau|_{\ell_K,K} \quad\forall\,\, K\in \cT,%, \\[2mm]
\end{eqnarray}
The estimate in (\ref{rti}) is standard for $\ell_K\geq 1$ and can be proved by the average Taylor series developed in \cite{DuSc:80} and the standard reference element technique with Piola transformation for $0<\ell_K<1$. We also should notice that the interpolations and approximation properties are entirely local.

Denote by $Q_k:  L^2 (\O) \mapsto D_k$ the $L^2$-projection onto $D_k:=\{v\in L^2(\O): v|_K \in P_k(K), K\in \cT\}$.  The following commutativity property is well-known:
$$
 \gradt (I_{rt}\,\btau)=Q_k\,\gradt\btau \qquad
 \forallqq\,\btau\in\Hdiv \cap H^r(\O)^d \,\mbox{ with }\, r>0.
$$
Thus we have the following local approximation property: for $\gradt\btau \in H^{t_K}(K)$, $0<t_K \leq k+1$,
\begin{eqnarray} \label{divrti}
\|\gradt(\btau - I_{rt} \btau)\|_{0,K}
  &\leq& C h_K^{t_K} |\gradt\btau|_{t_K,K} \quad\forall\,\, K\in \cT.%, \\[2mm]
\end{eqnarray}

Combing the above approximation properties and \eqref{Cea}, we have the following a priori error estimate:
\begin{thm}{\bf (A priori error estimate)} Assume that $u\in H^{1}_0(\O)$,  $u|_K \in H^{1+s_K}(K)$ for $K\in \cT_s$,  and  $u|_{\Delta_K} \in H^{1+s_{\Delta_K}}({\Delta_K})$ for $K\in \cT\backslash\cT_s$, where $\max_{K\in\cT_s}\{s_K\}\leq k+1$ and $\max_{K\in\cT\backslash\cT_s}\{s_{\Delta_K}\}\leq k+1$. Assume that $\bsigma|_K \in H^{\ell_K}(K)$ and $\gradt\bsigma|_K \in H^{t_K}(K)$,  for $0<\ell_K \leq k+1$ and $0<t_K \leq k+1$. Then we have the following a priori error estimate:
\begin{eqnarray} \label{apriori_ls}
\tri (\bsigma-\bsigma_h,u-u_h)\tri &\leq &C (\sum_{K\in\cT_s}h_K^{s_K} |u|_{1+s_K,K}+\sum_{K\in\cT\backslash \cT_s}h_K^{s_{\Delta_K}} |u|_{1+s_{\Delta_K},\Delta_K} \\ \nonumber
&&+\sum_{K\in\cT}(h_K^{\ell_K} |\bsigma|_{\ell_K,K} + h_K^{t_K} |\gradt\bsigma|_{t_K,K})).
\end{eqnarray}
\end{thm}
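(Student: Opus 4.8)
The plan is to obtain \eqref{apriori_ls} by combining the quasi-optimality estimate \eqref{Cea} with the local interpolation error bounds collected above, so that the argument reduces to choosing one convenient admissible pair $(\btau_h,v_h)\in RT_k\times S_{k+1,0}$ and estimating $\tri(\bsigma-\btau_h,u-v_h)\tri$ term by term. Since the coercivity $b((\btau,v),(\btau,v))\ge C\tri(\btau,v)\tri^2$ holds with no restriction on the mesh size, \eqref{Cea} is available unconditionally, and it is enough to bound $\tri(\bsigma-\btau_h,u-v_h)\tri^2 = \|\bsigma-\btau_h\|_0^2+\|\gradt(\bsigma-\btau_h)\|_0^2+\|\nabla(u-v_h)\|_0^2$ by the square of the right-hand side of \eqref{apriori_ls}, after which $\sqrt{a+b+c}\le\sqrt a+\sqrt b+\sqrt c$ (absorbed into the constant) finishes the proof.

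First I would handle the scalar component by taking $v_h$ to be the mixed interpolant underlying \eqref{uapp}: nodal interpolation $I_{no}u$ on elements $K\in\cT_s$, where the local regularity exponent $s_K$ is large enough for the nodal interpolation to be well defined, and the Scott--Zhang interpolation $I_{sz}u$ on the remaining elements $K\in\cT\backslash\cT_s$. Then \eqref{uapp} gives directly
\[
\|\nabla(u-v_h)\|_0 \le C\Big(\sum_{K\in\cT_s}h_K^{s_K}|u|_{1+s_K,K}+\sum_{K\in\cT\backslash\cT_s}h_K^{s_{\Delta_K}}|u|_{1+s_{\Delta_K},\Delta_K}\Big).
\]

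Next I would handle the flux by taking $\btau_h = I_{rt}\bsigma$, which is well defined element by element since $\bsigma|_K\in H^{\ell_K}(K)$ with $\ell_K>0$ for every $K\in\cT$ (the construction for $0<\ell_K<1$ being the one indicated just after \eqref{rti}). Squaring the local estimate \eqref{rti}, summing over $K\in\cT$, and using $(\sum_K a_K^2)^{1/2}\le\sum_K a_K$ yields $\|\bsigma-I_{rt}\bsigma\|_0 \le C\sum_{K\in\cT}h_K^{\ell_K}|\bsigma|_{\ell_K,K}$. For the divergence term, the commutativity property $\gradt(I_{rt}\bsigma)=Q_k\gradt\bsigma$ turns $\gradt(\bsigma-I_{rt}\bsigma)$ into $\gradt\bsigma-Q_k\gradt\bsigma$, so \eqref{divrti} and the same summation give $\|\gradt(\bsigma-I_{rt}\bsigma)\|_0 \le C\sum_{K\in\cT}h_K^{t_K}|\gradt\bsigma|_{t_K,K}$. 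Adding the three contributions produces \eqref{apriori_ls}.

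The only genuinely delicate ingredient is \eqref{uapp} itself, namely producing a single globally continuous function in $S_{k+1,0}$ that simultaneously realizes the sharp purely local nodal bound \eqref{nodal} on $\cT_s$ and the patch-based bound \eqref{sz} on $\cT\backslash\cT_s$; I expect this patching argument to be the main obstacle. Once \eqref{uapp}, \eqref{rti}, \eqref{divrti}, and \eqref{Cea} are granted, the remainder is a routine, entirely local, element-by-element assembly, with no mesh-size condition and no global regularity assumption needed.
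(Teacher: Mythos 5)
Your proposal is correct and follows essentially the same route as the paper, which simply combines the quasi-optimality bound \eqref{Cea} with the interpolation estimates \eqref{uapp}, \eqref{rti}, and \eqref{divrti} applied to the choices $v_h$ (nodal on $\cT_s$, Scott--Zhang elsewhere) and $\btau_h=I_{rt}\bsigma$. The patching subtlety you flag in \eqref{uapp} is real, but the paper treats \eqref{uapp} as an already-established ingredient rather than part of this theorem's proof, so your argument matches the paper's.
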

\begin{rem}
We should notice that the smoothness $s_K$, $\ell_K$, and $t_K$ can be independent. Compared to the standard analysis, the localized a priori does not require global smoothnesses of $\bsigma$ and $\gradt\bsigma$, or in terms of the data, global smoothnesses of $A\bff_2$ and $f_1$. We only need the local (element-wise) smoothness of such data. 

The discussion of \eqref{uapp} can be applied to the conforming $C^0$-Lagrange approximation to the standard variational formulation too. However, the analysis is not coefficient-robust compared to the results in \cite{CHZ:17,Zhang:20mixed} for discontinuous and mixed approximations.

The a priori error estimate with local regularity is the base for adaptive finite element methods to achieve equal discretization error distribution. In a sense, the a priori results show us what an optimal mesh should be. For example, assuming $\cT_s = \cT$, then an error-equal-distributed mesh should have a similar size of 
$h_K^{s_K} |u|_{1+s_K,K} + h_K^{\ell_K} |\bsigma|_{\ell_K,K} + h_K^{t_K} |\gradt\bsigma|_{t_K,K})$ for all $K\in\cT$.

\end{rem}

\subsection{A posteriori error estimate} Let
\beq
E=\bsigma -\bsigma_h \quad \mbox{and}\quad e=u-u_h.
\eeq
Then by the first-order system \eqref{fosys1_m1}, we have
\begin{eqnarray*}
M(\bsigma_h,u_h;f_1,\bff_2) &=& \|A^{-1/2}(\bsigma_h+A\nabla u_h -\bsigma-A\nabla u)\|_0^2 
+ \|\gradt\bsigma +Xu - \gradt\bsigma_h +Xu_h \|_0^2 \\
&=& \|A^{-1/2}(E+A\nabla e)\|_0^2 + \|\gradt E +Xe\|_0^2 = M(E,e;0,\bzero).
\end{eqnarray*}
Since $E\in H_N(\divvr;\O)$ and $e\in H^1_0(\O)$, we have
\beq \label{rande}
C_1 \tri (E,e) \tri^2 \leq M(\bsigma_h,u_h;f_1,\bff_2) \leq C_2 \tri (E,e) \tri^2.
\eeq
Thus we can define a posteriori error estimator and local error indicator as:
\begin{eqnarray*}
\eta_K^2 &=& \|A^{-1/2}\btau + A^{1/2}\nabla v-A^{1/2}\bff_2\|_{0,K}^2 +\|\gradt\btau +Xv-f_1  |_{0,K}^2,\\
\eta^2 &=& \sum_{K\in\cT}\eta_K^2 = M(\bsigma_h,u_h;f_1,\bff_2).
\end{eqnarray*}
The equivalence \eqref{rande} shows the reliability and efficiency of the error estimator. 

By the triangle inequality,  we also have the local efficiency bound,
\begin{eqnarray*}
\eta_K &\leq & C(\|\nabla e\|_0+\|e\|_0 + \|E\|_0 + \|\gradt E\|_0) \\
 &\leq& 
C (h_K^{s_K} |u|_{1+s_K,K}+h_K^{s_{\Delta_K}} |u|_{1+s_{\Delta_K},\Delta_K}+h_K^{\ell_K} |\bsigma|_{\ell_K,K} + h_K^{t_K} |\gradt\bsigma|_{t_K,K}).
\end{eqnarray*}
Thus, even though we do not have the local error exactness of the indicator, if the error indicators $\eta_K$ are of a similar size,  then we can achieve the local optimal error estimate \eqref{apriori_ls}, which means the mesh obtained by an equal $\eta_K$ distribution for all $K\in\cT$ is optimal.

%\section{Some Final Remarks}
%In this paper, we present three new proofs of the coerciveness the two variants of the $L^2$ and $H^{-1}$ FOSLS formulations for the second-order elliptic equations. The first proof is based on the a priori estimate of the PDE, which has the weakest assumption, but the underlying assumption is based on a proof of contradiction, thus it may limit its applications to mesh-dependent formulations. The second and third proofs are based on a sufficient assumption on the coefficients such that the standard variational problem is coercive. The second proof is very short and straightforward and the third proof is based on a norm equivalence lemma introduced in the discontinuous Petrov-Galerkin method which allows more complicated test functions. As direct proofs, the second and third ones have possibilities to handle the situations that the first proof can not, although their assumptions are more strict.
%
%We also discuss the FOSLS for the second-order elliptic equations with an $H^{-1}$ right-hand side. The key is to identify the correct flux variable in the $H(\divvr)$ space. We also discuss the local optimal error estimates, which weaken the assumptions on the smoothness of the data and pace the way for the optimality of the adaptive mesh based on the corresponding a posteriori error estimator.

\bibliographystyle{siamplain}
\bibliography{../bib/szhang}

\end{document}